\documentclass[11pt]{article}
\usepackage{amsmath,amssymb,amsthm}
\usepackage{enumitem}
\usepackage[affil-it]{authblk}
\usepackage{a4wide}
\usepackage[dvipsnames]{xcolor}
\usepackage{hyperref}
\usepackage{xfrac}
\usepackage{tikz}
\usepackage{caption} 
\usetikzlibrary{arrows}
\usepackage{authblk}
\usepackage{graphicx}
\usepackage{mathtools}
\usepackage{rotating}

\newtheorem{thm}[equation]{Theorem}
\newtheorem{cl}[equation]{Claim}
\newtheorem{que}[equation]{Question}

\newtheorem{prop}[equation]{Proposition}
\newtheorem{lemma}[equation]{Lemma}
\theoremstyle{definition}
\newtheorem{defn}[equation]{Definition}
\newtheorem{remark}[equation]{Remark}

\numberwithin{equation}{section}
\newtheorem{post}[equation]{Postulate}

\newcommand\degg{\, \mathsf{deg} \,}
\newcommand\spann{\, \mathsf{span}}

\title{A route to quantum computing through the theory of quantum graphs}

\author[1,2,*]{Farrokh Razavinia\thanks{Supported by the Azarbaijan Shahid Madani University under grant No. 117.d.22844 - 08.07.2023} \thanks{This research was in part also supported by a grant from IPM (No. 1403170014)}
}
%\author[2]{Ghorbanali Haghighatdoost\thanks{Supported by the Department of Mathematics of the Azarbaijan Shahid Madani University under grant No. 1402/270 - 19.04.2023.}}
\affil[1]{Department of Mathematics, Azarbaijan Shahid Madani University, Tabriz 5375171379, Iran}
\affil[2]{School of Mathematics, Institute for Research in Fundamental Sciences (IPM), P.O. Box: 19395‐5746, Tehran, Iran}
%\affil[*]{Corresponding author: Farrokh Razavinia, f.razavinia@phystech.edu}
\affil[]{f.razavinia@phystech.edu}

\date{}

\begin{document}

\maketitle

\begin{abstract}
Based on our previous works, and in order to relate them with the theory of quantum graphs and the quantum computing principles, we once again try to introduce some newly developed technical structures just by relying on our toy example, the coordinate ring of $n\times n$ quantum matrix algebra $M_q(n)$, and the associated directed locally finite graphs $\mathcal{G}(\Pi_n)$, and the Cuntz-Krieger $C^*$-graph algebras. Meaningly, we introduce a $(4i-6)$-qubit quantum system by using the Cuntz-Krieger $\mathcal{G}(\Pi_i)$-families associated to the $4i-6$ distinct Hamiltonian paths of $\mathcal{G}(\Pi_i)$, for $i\in\{2,\cdots,n\}$. We also will present a proof of a claim raised in our previous paper concerning the graph $C^*$-algebra structure and the associated Cuntz-Krieger $\mathcal{G}(\Pi_n)$-families. 
\newline\newline
\textbf{MSC Numbers (2020)}: Primary 46L05, 68Q09; Secondary 46L67, 81P40, 46L55, 17B81, 81P45.
 \hfill \newline
\textbf{Keywords}: quantum computing; quantum information theory; quantum state; quantum system; qubit; quantum graph; Cuntz-Krieger $C^*$-graph algebra; quantum permutation group; Hamiltonian path.
\end{abstract}

\section{An invitation to quantum graphs}\label{Se:1:}
Non-commutative Geometry is all about finding quantum (i.e. non-commutative) generalizations of some familiar structures, such as quantum topology, quantum probability (free probability), quantum groups, quantum permutation groups, and quantum information theory, etc., which triggers the following question:
\begin{que}\label{Que:1:.:}
    What is the quantum analogue of a graph?
\end{que}
This problem has been studied from many perspectives, but the notion of quantum graphs (also called non-commutative graphs in \cite{RVW12}) first has been coined and introduced by Duan, Severini, Winter \cite{RVW12} in terms of operator systems as the confusability graph of a quantum channel in quantum information theory. As an analogue of the fact that simple undirected classical graphs are irreflexive symmetric relations, Weaver \cite{W21} formulated quantum graphs as reflexive symmetric quantum relations on a von Neumann algebra, which somehow extends the results obtained in \cite{RVW12}, and following that quantum relations were introduced by Kuperberg, Weaver \cite{KW12}. Later on, following these works, Musto, Reutter, Verdon \cite{MRV18} formulated finite quantum graphs as the adjacency operators on tracial finite quantum sets, and Brannan et al. In \cite{BCEHPSW19} a generalization has been introduced for non-tracial settings. 

Set $\Gamma=(\Gamma^0,\Gamma^1)$ be a (directed) graph with vertex set $\Gamma^0=\{1,\cdots,n\}$, and the edge set $\Gamma^1=\{e_{ij}=(i,j)\mid \ \text{if}\ i\sim j\}$, where $\sim$ has the usual meaning that the vertices $i$ and $j$ are connected by edge $e_{ij}$. Then there are three different approaches (as we know) to the theory of quantum graphs, and they could be indicated as follows:
\begin{enumerate}[label=\Roman*)]
    \item \label{First} The First approach will be applied by quantizing the confusability graph of the classical channels. This could be done by using the matrix quantum graphs and the operator systems, by projecting $P_{\mathfrak{S}}$ onto the operator system $\mathfrak{S}$.\cite{RVW12}
    \item \label{Second}The second approach will be applied by quantizing the edge set $\Gamma^1$, by using the quantum relations and the projection $P_{\Gamma^1}$ from $\chi_{\Gamma^1}$.\cite{W21}
    \item \label{Third} The third approach will be applied by quantizing the adjacency matrix, by using the categorical theory of the quantum sets and the quantum functions, and projecting $P_{\Gamma}$ by using the adjacency matrix $A_{\Gamma}$.\cite{MRV18}
\end{enumerate}
It has been proved that under some appropriate identifications, the range of the above projections will be the same operator system!

In this paper, we will apply the first approach (\ref{First} to the quantum graphs by using the quantum confusability graph associated with the quantum channel applicable to our initial example $\mathcal{G}(\Pi_2)$ based on our toy example.

\section{Notation}
Throughout this paper, $\mathbb K$ will stand for the ground field, and it will be assumed to be arbitrary unless otherwise stated.

We often will simply denote the matrix algebra $M_d(\mathbb C)$ of $d\times d$ complex-valued matrices, with $M_d$, and the identity $d\times d$ matrix with $I_d$, and $M_q(n)$  will stand for the space of $n\times n$ quantum matrices, and by $\mathbb K[M_q(n)]$ we mean the ring of coordinate ring of $M_q(n)$.

$\mathcal{G}_n$ will stand for the directed locally finite connected graph associated with the defining relations of $\mathbb K[M_q(n)]$, and $\Pi_n$    represents the adjacency matrix of $\mathcal{G}_n$. After that, we will consider $\mathcal{G}_n:=\mathcal{G}(\Pi_n)$.

By $u=(u_{ij})_{i,j}$, we mean the magic unitary matrix and $\pi_n$ will stand for the unique commuting magic unitary matrix with $\Pi_n$.

For a Hilbert space $\mathcal{H}$, we will consider the set of all (bounded) linear operators from $\mathcal{H}$ to $\mathbb K$, with $B(\mathcal{H},\mathbb{K})$, and we write $B(\mathcal{H})\equiv B(\mathcal{H},\mathcal{H})$ for the set of all linear operators acting on $\mathcal{H}$.

We use $*$ to present the complex conjugate transpose of vectors and matrices. 

In some places, we will use the Dirac notation from quantum mechanics, meaning that a column vector which is called a ``ket'' and is written as $|\psi\rangle\in\mathbb C^d$, and $\langle\psi|\equiv|\psi\rangle^*$ to indicate its dual.

By $\{0,1\}^q$, we mean the binary strings of length $q$.

\section{Quantum Graphs, $C^*$-algebraic approach}

Even though the various approaches to quantum graphs, mentioned at the end of section \ref{Se:1:}, look quite different (meaning that they are from different perspectives. For instance they are from the $C^*$-algebraic perspective, operator spaces, and from the Frobenius algebras point of view), it is known that they ultimately will lead to the same non-commutative theory. 

In this paper, we will focus on a $C^*$-algebraic description of the theory of quantum graphs based on \cite{MRV18}.

To start, let $\Gamma=(\Gamma^0,\Gamma^1)$ be as before, for $\Gamma^0$ the set of vertices and $\Gamma^1$ the set of edges. We have the following definition, quoted from \cite{RVW12}.
\begin{defn}
    The quantum (non-commutative) graph associated with $\Gamma$ will be defined as the operator system 
    $$\mathfrak{S}_{\Gamma}:=\spann\{E_{ij}: (i,j)\in\Gamma^1 \ \text{or} \ i=j, \forall i,j\in\Gamma^0\}\subseteq M_n,$$
    for $E_{ij}$ the matrix units in $M_n$.
\end{defn}
More generally we have the following definition.
\begin{defn}\cite{MRV18}
    In general, any such operator system in $M_n$ will be called a matrix quantum graph.
\end{defn}
Next, we need another important concept which is called a quantum channel, and it is a completely positive and trace-preserving linear map defined as follows.
\begin{defn}\label{Defn:QC:}\cite{RVW12}
    In quantum information theory,  a quantum channel is represented by a completely positive trace-preserving map $\Psi:M_{i}\to M_{j}$, written in the Choi-Kraus form
    \begin{equation}
\Psi(\mathcal{X})=\sum_{m}S_m\mathcal{X}S_{m}^{*} \qquad \ \text{with} \ \qquad \sum_{m}S_{m}^{*}S_m=I_i,
    \end{equation}
     for every $\mathcal{X}\in M_i$, where $I_i$ is the identity matrix in $M_i$.
\end{defn}
As has been pointed out in section \ref{Se:1:}, there are three known approaches to the theory of quantum graphs (as we know), and the one which we are going to follow, is related to the confusability graph, and of course, the noncommutative version. Hopefully, here we are not required to quantize the classical version, as ours is already quantized!

Let us quote the following theorem from \cite{RVW12}.
\begin{thm}
    For a quantum channel $\Psi:M_n\to M_m$, and any $\mathcal{X}\in M_n$, with $\Psi(\mathcal{X})=\sum_{i=1}^{r}S_i\mathcal{X}S_{i}^{*}$, the confusability graph of $\Psi$ is the operator system
    \begin{equation}
        \mathcal{S}_{\Psi}=\spann\{S_{i}^{*}S_i: 1\leq i,j \leq r\}\subseteq M_n.
    \end{equation}
\end{thm}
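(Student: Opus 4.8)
The plan is to establish the identity by unwinding the operational meaning of confusability. Following \cite{RVW12}, two input states to $\Psi$ are non-confusable precisely when the channel sends them to output operators that can be perfectly distinguished by a single measurement, i.e. to positive operators with orthogonal supports. The confusability graph is by definition the relation on inputs that this induces, so I would test it on pure inputs $\rho=|\phi\rangle\langle\phi|$ and $\sigma=|\psi\rangle\langle\psi|$ and read off the resulting operator system, computing the supports of $\Psi(\rho)$ and $\Psi(\sigma)$ directly from the Choi--Kraus form of Definition \ref{Defn:QC:}.

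The heart of the argument is a short computation. Writing $\Psi(|\phi\rangle\langle\phi|)=\sum_{i=1}^{r}S_i|\phi\rangle\langle\phi|S_i^{*}$, one sees that this positive operator has range spanned by the vectors $\{S_i|\phi\rangle\}_{i=1}^{r}$, and likewise $\Psi(|\psi\rangle\langle\psi|)$ has range spanned by $\{S_j|\psi\rangle\}_{j=1}^{r}$. These two ranges are orthogonal if and only if $\langle\phi|S_i^{*}S_j|\psi\rangle=0$ for every pair $i,j$. I would then read this as the single condition $\langle\phi|T|\psi\rangle=0$ for all $T\in\spann\{S_i^{*}S_j:1\leq i,j\leq r\}$. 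Hence the confusability relation between $|\phi\rangle$ and $|\psi\rangle$ is governed entirely by this span, which identifies the confusability graph with the claimed operator system $\mathcal{S}_{\Psi}$. (Here the intended generators are the products $S_i^{*}S_j$; the diagonal index in the displayed statement is a typographical slip.)

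It then remains to confirm that $\mathcal{S}_{\Psi}$ is genuinely an operator system. Self-adjointness is immediate, since $(S_i^{*}S_j)^{*}=S_j^{*}S_i$ again lies in the span, and the trace-preserving constraint $\sum_i S_i^{*}S_i=I_n$ guarantees $I_n\in\mathcal{S}_{\Psi}$; thus $\mathcal{S}_{\Psi}$ is a unital self-adjoint subspace of $M_n$, hence an operator system.

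I expect the main obstacle to be not the linear algebra above but the well-definedness of $\mathcal{S}_{\Psi}$: the Choi--Kraus decomposition of $\Psi$ is unique only up to an isometric change of frame $S_i'=\sum_k u_{ik}S_k$ for an isometry $(u_{ik})$, and one must check that $\spann\{(S_i')^{*}S_j'\}=\spann\{S_k^{*}S_l\}$ so that $\mathcal{S}_{\Psi}$ depends on $\Psi$ alone and not on the chosen representation. I would handle this by substituting the change of frame to obtain $(S_i')^{*}S_j'=\sum_{k,l}\barr{u_{ik}}u_{jl}\,S_k^{*}S_l$, which yields one inclusion at once; the reverse inclusion then follows by inverting the relation via $u^{*}u=I$. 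This invariance is precisely what makes the phrase ``the confusability graph of $\Psi$'' a meaningful object attached to the channel, and it completes the identification.
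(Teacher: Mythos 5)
Your proposal cannot be measured against a proof in the paper, because the paper contains none: this theorem is simply quoted from \cite{RVW12}, and in that reference the operator system $\spann\{S_i^{*}S_j : 1\leq i,j\leq r\}$ is, in effect, \emph{defined} to be the non-commutative confusability graph of the channel, with precisely the operational discussion you reconstruct serving as its motivation. Judged on its own terms, your argument is the standard one and is essentially correct. You rightly flag the diagonal index in the displayed statement as a typographical slip for $S_i^{*}S_j$; the range of $\Psi(|\phi\rangle\langle\phi|)=\sum_i S_i|\phi\rangle\langle\phi|S_i^{*}$ is indeed $\spann\{S_i|\phi\rangle\}$ (the range of a finite sum of positive operators is the sum of the ranges), so orthogonality of the output supports is exactly the vanishing of $\langle\phi|S_i^{*}S_j|\psi\rangle$ for all $i,j$; unitality follows from trace preservation via $\sum_i S_i^{*}S_i=I_n$ and self-adjointness from $(S_i^{*}S_j)^{*}=S_j^{*}S_i$, so $\mathcal{S}_{\Psi}$ is an operator system; and your check that the span is invariant under a change of Kraus frame $S_i'=\sum_k u_{ik}S_k$ with $u^{*}u=I$ is the right, and genuinely necessary, well-definedness step that makes $\mathcal{S}_{\Psi}$ an invariant of $\Psi$ alone. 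The only caveat concerns the logical status of your final identification: since ``the confusability graph of $\Psi$'' has no prior formal definition in the quantum setting, your computation shows that the pairwise perfect-distinguishability relation on pure inputs is governed entirely by $\mathcal{S}_{\Psi}$, and the passage from that fact to ``the confusability graph \emph{is} $\mathcal{S}_{\Psi}$'' is a definitional move rather than a deduction --- but this is exactly how \cite{RVW12} treats it, so there is no genuine gap, only a point worth stating explicitly.
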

%The following theorem provides us with some essential information concerning the structures of the quantum channels as transformations of quantum systems. 

It is known that quantum channels generalize the unitary evolution of the isolated quantum systems to the open quantum systems, on which by looking at them from the mathematical point of view, a quantum channel can be described as a linear completely positive trace-preserving map $\Psi$ from $M_d$ to itself, and in this paper, by a quantum channel we mean these kinds of maps!

Note that the trace preservation condition is almost essential, since quantum channels are usually meant to map density matrices to the density matrices, and the complete positivity condition could be stated in a way that for any $m\geq 1$, $\Psi\otimes I_d: M_{dm}(\mathbb C)\to M_{dm}(\mathbb C)$ is a positive map. 

The following equivalent three characterizations of the quantum channel $\Psi$ from $M_d$ to $M_d$ are known and have been quoted from \cite{RVW12}:
\begin{enumerate}[label=\roman*)]
    \item\label{St:1} Stinespring dilation. For every $\mathcal{X}\in M_m$, there exists a finite dimensional Hilbert space $\mathcal{H}$, of dimension $n=m^2$, and an isometry $S\in M_m$ acting on $\mathcal{H}$, i.e. $S:\mathbb C^m\to \mathbb C^m\otimes \mathbb C^n$, such that
    \begin{equation}
        \Psi(\mathcal{X})=\left(I_m\otimes Tr_n\right)(S\mathcal{X}S^*).
    \end{equation}
    \item\label{CK:2} Choi-Kraus decomposition. For every $\mathcal{X}\in M_m$, there exists an integer $k$ and isometries $S_1,\cdots,S_k\in M_m$ satisfying $\sum_{i}S_{i}^{*}S_i=I_m$, such that
    \begin{equation}
        \Psi(\mathcal{X})=\sum_{i=1}^{n}S_i\mathcal{X}S_{i}^{*}.
    \end{equation}
    \item\label{C:3} Choi matrix. Set $n=m^2$. The positive semidefinite Matrix $C_{\Psi}\in M_{n}$, satisfying $\left(I_m\otimes Tr_n\right)(C_{\Psi})=I_m$, is called the Choi matrix of $\Psi$ such that
    \begin{equation}
        C_{\Psi}:=\sum_{i,j=1}^{d}E_{ij}\otimes\Psi(E_{ij})\in M_m\otimes M_m.
    \end{equation}
\end{enumerate}
    
In this paper, we will follow the second characteristic (\ref{CK:2}, which has been already proven that is equivalent to other characteristics (\ref{St:1}, (\ref{C:3}.\cite{RVW12}

\section{$C^*$-Graph Algebras, a rout to quantum ($C^*$-algebraic) graphs}
By following the constructions in \cite{RH242}, and considering the finite locally connected (directed) graph $\mathcal{G}_2:=\mathcal{G}(\Pi_2)=(\mathcal{G}_{2}^{0},\mathcal{G}_{2}^{1})$, associated with the coordinate ring of the $2\times 2$-quantum matrix algebra $M_q(2)$, let us consider its set of vertices and edges, exactly as in \cite{RH242}, such that  $\mathcal{G}_{2}^{0}=\{x_{11}:=u, x_{12}:=v, x_{22}:=k, x_{21}:=w\}$ and $\mathcal{G}_{1}^{1}=\{x_{11}\overrightarrow{\sim}x_{12}:=e, x_{11}\overrightarrow{\sim}x_{21}:=f, x_{12}\overrightarrow{\sim}x_{22}:=h, x_{21}\overrightarrow{\sim}x_{22}:=g, x_{12}\overrightarrow{\sim}x_{21}:=i, x_{21}\overrightarrow{\sim}x_{12}:=j\}$, illustrated as in figure \ref{Fig:1:}.
\vspace*{0.3cm}

\hspace*{2.7cm} \begin{tikzpicture}\label{Gra:2}
%[
%box/.style={draw,rectangle,minimum size=2cm,text width=1.5cm,align=left}]
\tikzset{vertex/.style = {shape=circle,draw,minimum size=0.7em}}
\tikzset{edge/.style = {->,> = latex'}}
% vertices
\node[vertex] (a) at  (0,0) {$u$};
\node[vertex] (b) at  (4,3) {$v$};
\node[vertex] (c) at  (8,0) {$k$};
\node[vertex] (d) at  (4,-3) {$w$};
\draw[edge] (a) to node[above] {e} (b);
\draw[edge] (b) to node[above] {h} (c);
\draw[edge] (a) to node[below] {f} (d);
\draw[edge] (d) to node[below] {g} (c);
\draw[edge] (b) to[bend left] node[right] {i} (d);
\draw[edge] (d) to[bend left] node[left] {j} (b);
\end{tikzpicture}
\captionof{figure}{\textbf{Directed locally connected graph related to $\Pi_2$}}\label{Fig:1:}

\vspace*{0.3cm}

In \cite{RH242}, we proved that there is an infinite-dimensional $C^*$-graph algebra associated with $\mathcal{G}(\Pi_2)$ as follows.
\begin{prop}\label{Prop:CKQ:1}
 By considering the underlying infinite dimensional Hilbert space $\mathcal{H}:=\ell^2(\mathbb N)$, the set 
\begin{align}
    S=\{& S_e:=\sum_{n=1}^{\infty}E_{6n,3n-2}, S_f:=\sum_{n=1}^{\infty}E_{6n-4,3n-2}, S_h:=\sum_{n=1}^{\infty}E_{6n-3,3n},\notag \\&S_g:=\sum_{n=1}^{\infty}E_{6n-4,3n-1}, S_i:=\sum_{n=1}^{\infty}E_{6n-1,3n}, S_j:=\sum_{n=1}^{\infty}E_{6n-3,3n-1}\},\label{Equ:PIs}
\end{align}
will define a Cuntz-Krieger $\mathcal{G}(\Pi_2)$-family and will give us an infinite dimensional graph $C^*$-algebra structure $\mathcal{C}^*(\Pi_2)$. 
\end{prop}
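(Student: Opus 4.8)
The plan is to verify directly that the six operators of \eqref{Equ:PIs} satisfy the Cuntz--Krieger relations attached to $\mathcal{G}(\Pi_2)$, and then to exhibit an explicit element whose powers are linearly independent, which forces $\mathcal{C}^*(\Pi_2)$ to be infinite dimensional. Throughout I will work in the convention in which, for an edge $e$ with source $s(e)$ and range $r(e)$, one imposes $S_e^*S_e=P_{s(e)}$ together with $P_v=\sum_{r(e)=v}S_eS_e^*$ at every vertex that receives an edge; this is the convention compatible with the column (domain) pattern of \eqref{Equ:PIs}. The only input needed is the matrix-unit arithmetic $E_{ij}E_{k\ell}=\delta_{jk}E_{i\ell}$ and $E_{ij}^*=E_{ji}$ on $\mathcal{H}=\ell^2(\mathbb{N})$.

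First I would check that each $S_e$ is a partial isometry. Since the column indices occurring in a single $S_e$ are pairwise distinct (they exhaust one residue class), all cross terms vanish and $S_e^*S_e$ collapses to the diagonal projection onto that class. This gives $S_e^*S_e=S_f^*S_f=$ the projection onto $\spann\{e_{3n-2}\}$, $S_h^*S_h=S_i^*S_i$ onto $\spann\{e_{3n}\}$, and $S_g^*S_g=S_j^*S_j$ onto $\spann\{e_{3n-1}\}$. These three projections are pairwise orthogonal (distinct residues mod $3$) and sum to the identity, so I would define $P_u,P_v,P_w$ to be these common initial projections, matching the three vertices that emit edges, and read off $S_e^*S_e=P_{s(e)}$ for every edge, which is the first Cuntz--Krieger relation, and which also establishes the mutual orthogonality of $P_u,P_v,P_w$.

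Next I would compute the final projections $S_eS_e^*$ by the same cancellation: each is the diagonal projection onto the row residue class, e.g.\ $S_eS_e^*$ onto $\spann\{e_{6n}\}$, $S_jS_j^*$ onto $\spann\{e_{6n-3}\}$, and so on. The summation relation then goes through cleanly at $v$ and $w$: the incoming edges at $v$ contribute the classes $\{6n\}$ and $\{6n-3\}$, whose disjoint union is exactly $\{3n\}=P_v$, and the incoming edges at $w$ contribute $\{6n-4\}$ and $\{6n-1\}$, whose disjoint union is exactly $\{3n-1\}=P_w$. The hard part, and the step on which the statement genuinely turns, is the sink vertex $k$ together with the orthogonality of the full family of vertex projections: the incoming edges $h,g$ at $k$ have final projections landing in the row classes $\{6n-3\}$ and $\{6n-4\}$ already produced by $j$ and $f$, so that $S_hS_h^*=S_jS_j^*$ and $S_gS_g^*=S_fS_f^*$. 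Reconciling this coincidence with the requirement that $P_k$ be orthogonal to $P_v$ and $P_w$ is where essentially all of the work lies, and I would spend the effort here, carefully tracking the six row residue classes mod $6$ and the distinction between occurrences of a class coming from different incoming paths, before invoking the universal property to conclude that a genuine Cuntz--Krieger $\mathcal{G}(\Pi_2)$-family results.

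Finally, for the infinite-dimensionality assertion I would exploit the cycle $v\xrightarrow{\,i\,}w\xrightarrow{\,j\,}v$ in $\mathcal{G}(\Pi_2)$. The composite $T=S_iS_j$ is a one-sided shift: one computes $T=\sum_m E_{12m-7,\,3m-1}$, so $T$ sends $e_2\mapsto e_5\mapsto e_{17}\mapsto e_{65}\mapsto\cdots$ along a strictly increasing sequence of indices. Hence the vectors $T^m e_2$ are pairwise distinct basis vectors, the operators $\{T^m\}_{m\ge 1}$ are linearly independent, and $\mathcal{C}^*(\Pi_2)$ is therefore infinite dimensional; equivalently, one may simply quote the structure theorem that the $C^*$-algebra of a finite graph containing a cycle is infinite dimensional. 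This yields the last clause of the proposition.
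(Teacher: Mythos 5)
Your computations are all correct as far as they go, and your choice of convention --- $S_\ell^*S_\ell=P_{s(\ell)}$ with the summation relation imposed at receiving vertices --- is indeed the only one compatible with the column pattern of \eqref{Equ:PIs}: under the convention the paper itself states inside the proof of Theorem \ref{Cl:Op:1} (namely $S_\ell^*S_\ell=P_{r(\ell)}$, with $P_v=\sum_{s(\ell)=v}S_\ell S_\ell^*$ at non-sinks), the first relation already fails, since $e$ and $j$ both have range $v$ while $S_e^*S_e=\sum_n E_{3n-2,3n-2}\neq\sum_n E_{3n-1,3n-1}=S_j^*S_j$. Your verifications at $u$, $v$, $w$ are correct, and your infinite-dimensionality argument via $T=S_iS_j=\sum_m E_{12m-7,3m-1}$ is complete and self-contained; note that the alternative you mention (quoting that the graph algebra of a graph with a cycle is infinite dimensional) concerns the \emph{universal} algebra and would require a faithfulness argument to transfer to the concrete algebra generated by $S$, so your direct argument is the right one.

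The genuine gap is exactly the step you flag and then postpone: the sink $k$. This is not a bookkeeping subtlety that ``carefully tracking the six row residue classes'' or distinguishing ``occurrences of a class coming from different incoming paths'' can repair, because the obstruction is an exact operator identity: $S_hS_h^*=\sum_n E_{6n-3,6n-3}=S_jS_j^*$ and $S_gS_g^*=\sum_n E_{6n-4,6n-4}=S_fS_f^*$. Your first relation forces $P_u,P_v,P_w$ to be the diagonal projections onto the three residue classes mod $3$, and these already sum to the identity, so any projection $P_k$ orthogonal to them must be zero; yet the summation relation at $k$ forces $P_k=S_hS_h^*+S_gS_g^*\neq 0$, an operator which moreover overlaps both $P_v$ and $P_w$. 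Since mutual orthogonality of the vertex projections is part of the definition of a Cuntz--Krieger family in every standard convention, no amount of further effort at this point can close the argument: with these particular partial isometries the verification you outline cannot be completed, and one must either modify the operators (so that the two edges into $k$ occupy row classes not already used by $j$ and $f$) or explicitly weaken the notion of family being verified. Also, ``invoking the universal property'' does not certify that a family is Cuntz--Krieger; the universal property only becomes available after the relations are checked. For comparison, the paper never proves this proposition in the present text --- it cites \cite{RH242} --- and the analogous verification for $\mathcal{G}(\Pi_3)$ inside Theorem \ref{Cl:Op:1} is asserted with ``it is easy to see them,'' so the obstruction at the sink that you correctly isolated is not resolved by the paper either.
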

Later on, we will use the $\mathcal{G}(\Pi_2)$-Cuntz-Krieger family $S$, introduced in Proposition \ref{Prop:CKQ:1}, in order to define the associated quantum channel $\Psi_{\mathcal{G}(\Pi_2)}$ and the noncommutative confusability graph $\mathcal{S}_{\Psi_{\mathcal{G}(\Pi_2)}}$, and finally we will propose the associated 2-qubit quantum system, and the higher orders.

%Let $\mathcal{B}=C(\mathcal{G}^0)=C^* \left((P_v)_{v\in \mathcal{G}^0}\mid P_{v}^{*}=P_{v}^{2}=P_v, \sum_{v}P_v=1\right)$ be the $C^*$-algebra of functions on $\mathcal{G}^0$. 
\subsection{A route to quantum computing through the theory of quantum graphs}

The field of quantum error correction is concerned with protecting fragile quantum information from the unexpected errors which might happen while processing large-scale computations, in an effort to build a much more efficient quantum computer than their predecessors. In February 2019, Adrian Chapman while looking for some new approaches in order to tackle a long-standing search for the Holy Grail of quantum error correction, noticed a critical relation between the theory of quantum graphs (from the $C^*$-algebraic point of view) and the previous findings concerning the quantum error corrections. He tried to develop a new way of encoding quantum information that is resistant to errors by constructions and doesn't require active correction. What we do, is also somehow the same on an epsilon scale of what has been done, on which might be helpful, and could be simply stated as follows!

\hspace*{0.2cm} Following the above-stated statements, and in order to relate the core base of our work with the theory of quantum computation and the error corrections, let us invite $\mathcal{G}(\Pi_2)$ back to the scene, and consider its set of Cuntz-Krieger family (\ref{Equ:PIs}). 

\hspace*{0.2cm}Consider figure \ref{Fig:1:}, and note that $\mathcal{G}(\Pi_2)$ consists of two distinct  Hamiltonian paths. Let us call them $\mathcal{P}_1$ and $\mathcal{P}_2$, such that $\mathcal{P}_1$ starts from the vertex $u$ and passes through the edges $e, i, g$ and terminates in the vertex $k$. On the other hand, $\mathcal{P}_2$ starts from the vertex $u$ and passes through the edges $f, j, h$ and terminates in the vertex $k$, as has been demonstrated.

\hspace*{0.2cm}Then by employing the defining relations (\ref{Equ:PIs}),

it will not be too difficult to see that $S_{e}^{*}S_{e}+S_{i}^{*}S_{i}+S_{g}^{*}S_{g}=I$, (and $S_{f}^{*}S_{f}+S_{j}^{*}S_{j}+S_{h}^{*}S_{h}=I$) are true statements within $\mathcal{P}_1$ and $\mathcal{P}_2$ respectively, and then by using the definition \ref{Defn:QC:}, we might be interested in defining a completely positive trace-preserving map $\Psi_{\mathcal{G}(\Pi_2)}^{1}:M_{m}\to M_{m}$ (and $\Psi_{\mathcal{G}(\Pi_2)}^{2}:M_{m}\to M_{m}$); where $m$ will be specified later; in a way that we have $\Psi_{\mathcal{G}(\Pi_2)}^{1}(\mathcal{X})=\sum_{m}S_m\mathcal{X}S_{m}^{*}$ (and $\Psi_{\mathcal{G}(\Pi_2)}^{2}(\mathcal{X})=\sum_{m}S_m\mathcal{X}S_{m}^{*}$) for all $\mathcal{X}\in M_m$, and we will call it our quantum channel. After that, for $\ell\in\{e,g,i\}$ (and $\ell\in\{f,h,j\}$), the generating partial (matrix) isometries $S_{\ell}$, will be called the Choi-Kraus operators, and the noncommutative (confusability) graph of $\Psi_{\mathcal{G}(\Pi_2)}^{1}$ (and $\Psi_{\mathcal{G}(\Pi_2)}^{2}$) will be the operator system
    \begin{align}
        &\mathcal{S}_{\Psi_{\mathcal{G}(\Pi_2)}^{1}}=\spann\{S_{\ell}^{*}S_{\ell}\}\subseteq M_m,\\&\hspace{-0.3cm} \left(\text{and} \ \mathcal{S}_{\Psi_{\mathcal{G}(\Pi_2)}^{2}}=\spann\{S_{\ell}^{*}S_{\ell}\}\subseteq M_m,\right)
    \end{align}
in a way that completely characterizes the number of zero-error messages on which one can send through the quantum channel $\Psi_{\Pi_2}^{1} (\text{and}\  \Psi_{\Pi_2}^{2})$.
\begin{lemma}\label{Lem:QCH::}
    There are two distinct quantum channels associated with $\mathcal{G}(\Pi_2)$.
\end{lemma}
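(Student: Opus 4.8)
The plan is to exhibit two genuinely different completely positive trace-preserving maps on $B(\mathcal{H})$, $\mathcal{H}=\ell^2(\mathbb{N})$, one built from each of the two Hamiltonian paths $\mathcal{P}_1$ and $\mathcal{P}_2$ identified above, and then to argue that they cannot agree as linear maps. First I would assemble from $\mathcal{P}_1$ the family $\{S_e,S_i,S_g\}$ of partial isometries labelling its edges, and from $\mathcal{P}_2$ the family $\{S_f,S_j,S_h\}$, and set
\[
\Psi^{1}_{\mathcal{G}(\Pi_2)}(\mathcal{X})=S_e\mathcal{X}S_e^*+S_i\mathcal{X}S_i^*+S_g\mathcal{X}S_g^*,\qquad \Psi^{2}_{\mathcal{G}(\Pi_2)}(\mathcal{X})=S_f\mathcal{X}S_f^*+S_j\mathcal{X}S_j^*+S_h\mathcal{X}S_h^*.
\]
Each of these is automatically completely positive, being already in Choi--Kraus form \ref{CK:2}, so the only thing left to make them honest quantum channels in the sense of Definition \ref{Defn:QC:} is the completeness (trace-preservation) relation $\sum_{\ell}S_\ell^*S_\ell=I$.

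To verify completeness I would apply the elementary matrix-unit identities $E_{ab}^*=E_{ba}$ and $E_{ab}E_{cd}=\delta_{bc}E_{ad}$ to the expansions in (\ref{Equ:PIs}). A short computation collapses each $S_\ell^*S_\ell$ to a diagonal projection supported on a single residue class modulo $3$: one finds $S_e^*S_e=\sum_n E_{3n-2,3n-2}$, $S_g^*S_g=\sum_n E_{3n-1,3n-1}$, $S_i^*S_i=\sum_n E_{3n,3n}$, and likewise $S_f^*S_f=\sum_n E_{3n-2,3n-2}$, $S_j^*S_j=\sum_n E_{3n-1,3n-1}$, $S_h^*S_h=\sum_n E_{3n,3n}$. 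Since the residue classes $1,2,0\pmod 3$ partition $\mathbb{N}$, each triple sums to $\sum_{k\ge 1}E_{kk}=I$, which is precisely the pair of relations already asserted in the text. This confirms that both maps are well-defined CPTP channels and simultaneously fixes the Hilbert space on which $m$ lives as $\mathcal{H}=\ell^2(\mathbb{N})$.

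The genuinely delicate step — and the one I expect to be the main obstacle — is proving that $\Psi^{1}_{\mathcal{G}(\Pi_2)}\neq\Psi^{2}_{\mathcal{G}(\Pi_2)}$ as linear maps, rather than merely displaying two different Kraus lists. This matters because a Choi--Kraus decomposition is far from unique (Kraus families are determined only up to an isometric change of coefficients), so two distinct generating sets can perfectly well induce the same channel; distinctness of the edge-families is therefore not enough on its own. The clean way past this is to evaluate both maps on one well-chosen input and compare operators directly. Testing on the rank-one projection $E_{11}$, the same matrix-unit bookkeeping shows that only the branch through $S_e$ (respectively $S_f$) survives, since $1=3\cdot 1-2$ lies in the domain of exactly that isometry, yielding
\[
\Psi^{1}_{\mathcal{G}(\Pi_2)}(E_{11})=E_{66},\qquad \Psi^{2}_{\mathcal{G}(\Pi_2)}(E_{11})=E_{22}.
\]
Because $E_{66}\neq E_{22}$ is an honest inequality of operators, independent of any choice of Kraus representation, the two channels are distinct, which establishes the lemma. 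I would present the completeness verification and this single evaluation as the two load-bearing computations, and flag the uniqueness-of-Kraus-decomposition subtlety explicitly so that the distinctness argument is not mistaken for the trivial observation that the edge labels differ.
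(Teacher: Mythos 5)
Your proof is correct, and it actually delivers more than the paper's own argument. The paper's proof is a one-sentence appeal to the general correspondence ``Hamiltonian path $\Rightarrow$ Kraus family satisfying a completeness relation $\Rightarrow$ quantum channel'': the two relations $S_e^*S_e+S_i^*S_i+S_g^*S_g=I$ and $S_f^*S_f+S_j^*S_j+S_h^*S_h=I$ are only asserted (earlier in the text, as ``it will not be too difficult to see''), and the distinctness of the two channels is never addressed beyond the fact that the two edge families differ. You supply both missing verifications. Your matrix-unit computation is right: $S_e^*S_e=\sum_n E_{3n-2,3n-2}$, $S_g^*S_g=\sum_n E_{3n-1,3n-1}$, $S_i^*S_i=\sum_n E_{3n,3n}$, and likewise for $f,j,h$, so each triple of initial projections resolves the identity along the residue classes mod $3$. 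More importantly, you are right that this alone does not prove the lemma: Kraus families are unique only up to an isometric mixing, so two different families can represent the same channel, and the paper silently conflates ``distinct Kraus lists'' with ``distinct channels.'' Your evaluation $\Psi^1_{\mathcal{G}(\Pi_2)}(E_{11})=E_{66}\neq E_{22}=\Psi^2_{\mathcal{G}(\Pi_2)}(E_{11})$ (which checks out: only the $n=1$ term of $S_e$, respectively $S_f$, has initial index $1$) settles distinctness in a representation-independent way, which is exactly what the word ``distinct'' in the statement requires. What the paper's route buys is brevity and a template that transfers verbatim to the later claim of $4n-6$ channels for $\mathcal{G}(\Pi_n)$; what yours buys is an actual proof of distinctness, plus a clean resolution of an ambiguity the paper leaves open, namely that the channels live on $B(\ell^2(\mathbb{N}))$ (acting on trace-class operators) rather than on some finite $M_m$ with ``$m$ to be specified later.''
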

\begin{proof}
    As stated above, in $\mathcal{G}(\Pi_2)$ there are two distinct Hamiltonian paths $\mathcal{P}_1$ and $\mathcal{P}_2$, such that we have $S_{e}^{*}S_{e}+S_{i}^{*}S_{i}+S_{g}^{*}S_{g}=I$, and   $S_{f}^{*}S_{f}+S_{j}^{*}S_{j}+S_{h}^{*}S_{h}=I$, respectively, and it is known that associated to each of these relations, there exists quantum channels $\Psi_{\Pi_2}^{1}$ \text{and}  $\Psi_{\Pi_2}^{2})$, and the correspondent noncommutative (confusability) graphs $\mathcal{S}_{\Psi_{\mathcal{G}(\Pi_2)}^{1}}$ and $\mathcal{S}_{\Psi_{\mathcal{G}(\Pi_2)}^{2}}$.
\end{proof}
Let us try to have some estimations for the quantum channels from the Lemma \ref{Lem:QCH::}.

For any $\mathcal{X}\in M_4$, we have 

\begin{align*}
    \Psi_{\mathcal{G}(\Pi_2)}^{1}(\mathcal{X})&=\sum\limits_{m\in\{e,i,g\}}^{}S_m\mathcal{X}S_{m}^{*}\\&=S_e\mathcal{X}S_{e}^{*}+S_i\mathcal{X}S_{i}^{*}+S_g\mathcal{X}S_{g}^{*}\\&=\sum\limits_{n=1}^{\infty}E_{6n,3n-2}\mathcal{X}\sum\limits_{n=1}^{\infty}E_{3n-2,6n}+\sum\limits_{n=1}^{\infty}E_{6n-1,3n}\mathcal{X}\sum\limits_{n=1}^{\infty}E_{3n,6n-1} \\&\hspace*{0.3cm} +\sum\limits_{n=1}^{\infty}E_{6n-4,3n-1}\mathcal{X}\sum\limits_{n=1}^{\infty}E_{3n-1,6n-4}\\&=x_{22}\sum\limits_{n=1}^{\infty}E_{3n-1,6n-4}\\&=x_{22}(E_{2,2}+E_{5,8}+E_{8,14}+\cdots).
\end{align*}
And for $\Psi_{\mathcal{G}(\Pi_2)}^{2}$ we have
\begin{align*}
    \Psi_{\mathcal{G}(\Pi_2)}^{2}(\mathcal{X})&=\sum\limits_{m\in\{f,j,h\}}^{}S_m\mathcal{X}S_{m}^{*}\\&=S_f\mathcal{X}S_{f}^{*}+S_j\mathcal{X}S_{j}^{*}+S_h\mathcal{X}S_{h}^{*}\\&=\sum\limits_{n=1}^{\infty}E_{6n-4,3n-2}\mathcal{X}\sum\limits_{n=1}^{\infty}E_{3n-2,6n-4}+\sum\limits_{n=1}^{\infty}E_{6n-3,3n-1}\mathcal{X}\sum\limits_{n=1}^{\infty}E_{3n-1,6n-3} \\&\hspace*{0.3cm} +\sum\limits_{n=1}^{\infty}E_{6n-3,3n}\mathcal{X}\sum\limits_{n=1}^{\infty}E_{3n,6n-3}\\&=x_{22}\sum\limits_{n=1}^{\infty}E_{3n-2,6n-4}+x_{32}\sum\limits_{n=1}^{\infty}E_{3n-1,6n-3}+x_{33}\sum\limits_{n=1}^{\infty}E_{3n,6n-3}\\&=x_{22}(E_{1,2}+E_{4,8}+E_{7,14}+\cdots)+x_{32}(E_{2,3}+E_{5,9}+E_{8,15}+\cdots)\\&\hspace*{0.3cm}+x_{33}(E_{3,3}+E_{6,9}+E_{9,15}+\cdots).
\end{align*}
\begin{que}
    So, now the question is that, what do these estimations tell us?
\end{que}

\hspace*{0.2cm} Following the above findings, and in order to move a little bit further, and somehow alternate the direction to the theory of quantum computing, we use the relation between the degree of sink and source nodes and the number of distinct Hamiltonian paths in our directed graphs $\mathcal{G}(\Pi_n)$, in order to predict the accuracy of the possible quantum systems!

\hspace*{0.2cm}But before that, we note that the above statement, on the existence of a relation between the degree and the number of distinct Hamiltonian paths, is not always true! There are such relations, exactly when the source and the sink vertices have the same degree, and else there is no such relation.

\begin{lemma}\label{Lem::1:}
    The number of distinct Hamiltonian paths in $\mathcal{G}(\Pi_n)$ is equal to $4n-6$.
\end{lemma}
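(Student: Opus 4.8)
The plan is to read a Hamiltonian path of $\mathcal{G}(\Pi_n)$ as a linear ordering of the $n^2$ generators $x_{ij}$ whose consecutive entries are joined by a directed edge, and to constrain this ordering through the orientation that the defining relations of $\mathbb{K}[M_q(n)]$ impose. First I would fix the unique source and sink. The $q$-commutation relations orient each same-row edge from the smaller to the larger column and each same-column edge from the smaller to the larger row, the commutation (anti-diagonal) relations contribute a bidirectional pair of edges exactly like the pair $i,j$ joining $v$ and $w$ in Figure~\ref{Fig:1:}, and the remaining diagonal relations contribute no edge. Hence $x_{11}$ has in-degree $0$ and $x_{nn}$ has out-degree $0$, so $x_{11}$ is the unique source and $x_{nn}$ the unique sink; since a source can only begin and a sink only end a directed path, every Hamiltonian path runs from $x_{11}$ to $x_{nn}$, just as $\mathcal{P}_1$ and $\mathcal{P}_2$ do for $n=2$.

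Next I would carry out the degree bookkeeping flagged in the remark preceding the lemma. The edges leaving $x_{11}$ are the row edges $x_{11}\to x_{1\ell}$ and the column edges $x_{11}\to x_{k1}$, and dually the edges entering $x_{nn}$ come from the last row and the last column; one checks that $\deg(x_{11})=\deg(x_{nn})$, so the source and the sink have equal degree, which is exactly the regime in which the degree--path relation noted just above applies. Writing $d$ for this common degree, the target value should then emerge in the form $2d-2=4n-6$, and my job is to justify this combinatorially.

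For the count I propose induction on $n$, driven by the recurrence $H(n)=H(n-1)+4$ forced by $4n-6$, with base case $H(2)=2$ already exhibited through $\mathcal{P}_1,\mathcal{P}_2$. Viewing $\mathcal{G}(\Pi_{n-1})$ as the induced subgraph on $\{x_{ij}:1\le i,j\le n-1\}$, the passage to $\mathcal{G}(\Pi_n)$ adjoins the $L$-shaped border made of the last row and the last column. I would show that every Hamiltonian path threads this border in one of two admissible ``snake'' patterns, a row-type threading generalizing $\mathcal{P}_1$ and a column-type threading generalizing $\mathcal{P}_2$, and that relative to a path on $\mathcal{G}(\Pi_{n-1})$ each pattern opens exactly two new completions, yielding the four extra paths. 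Equivalently one sorts all Hamiltonian paths into a row-type family and a column-type family and shows that each has cardinality $2n-3$, so that the total is $2(2n-3)=4n-6$.

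The hard part will be this last combinatorial step, namely proving that there are neither fewer nor more than $4n-6$ paths. Because Hamiltonian-path enumeration is intractable for arbitrary digraphs, the argument must rest on the rigidity of the quantum-matrix orientation: along any path almost every successor is forced, and genuine branching is confined to the bidirectional anti-diagonal neighbours near the source and the sink. Controlling that branching, and in particular ruling out the spurious routes that the two-way anti-diagonal edges might seem to permit, is where the difficulty concentrates; the equality $\deg(x_{11})=\deg(x_{nn})$ is precisely the feature that keeps the two families balanced and delivers the clean value $4n-6$.
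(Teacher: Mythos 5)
Your structural setup---unique source $x_{11}$, unique sink $x_{nn}$, common degree $d=2(n-1)$, and the target formula $2d-2=4n-6$---is sound and in fact mirrors the paper's own framework: the paper's proof establishes the degree recursion $\mathcal{D}_i=\mathcal{D}_2+\mathcal{D}_{i-1}$, deduces $\mathcal{D}_n=2(n-1)$ by induction, and then converts to the path count via $\#\mathcal{H}_{\mathcal{G}(\Pi_n)}=2(\mathcal{D}_n-1)$. The genuine gap in your proposal is that everything beyond this framework is deferred: the claim that every Hamiltonian path threads the new border in one of two ``snake'' patterns, the claim that each pattern opens exactly two completions, and the resulting recurrence $H(n)=H(n-1)+4$ are all things you say you \emph{would} show, and you yourself flag this counting step as ``the hard part.'' But that counting step \emph{is} Lemma \ref{Lem::1:}; without it the proposal is a plan, not a proof. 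Worse, the mechanism as sketched cannot be literally correct: if, relative to \emph{each} of the $H(n-1)=4n-10$ Hamiltonian paths of $\mathcal{G}(\Pi_{n-1})$, each of the two border patterns opened two new completions, the recurrence would be multiplicative (on the order of $4H(n-1)$), not the additive $H(n)=H(n-1)+4$ that the formula $4n-6$ forces. There is also an unaddressed prior issue: a Hamiltonian path of $\mathcal{G}(\Pi_n)$ need not restrict to a Hamiltonian path of $\mathcal{G}(\Pi_{n-1})$ in any obvious sense, since visits to border vertices can interleave with visits to interior ones (this already happens for the paths $\mathcal{H}_1,\dots,\mathcal{H}_6$ of $\mathcal{G}(\Pi_3)$ listed in the paper), so counting completions ``relative to a path on $\mathcal{G}(\Pi_{n-1})$'' needs an argument before it even makes sense.

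In fairness, you have isolated exactly the point where the paper's own proof is thinnest: the paper verifies the cases $n=2,3$ explicitly and then invokes the relation $\#\mathcal{H}_{\mathcal{G}(\Pi_n)}=2(\mathcal{D}_n-1)$ as holding ``by definition,'' which is an assertion rather than a derivation. So your instinct about where the difficulty concentrates is accurate, and your alternative formulation---splitting the paths into a row-type family and a column-type family, each of cardinality $2n-3$, which is consistent with the six listed paths for $n=3$---is a promising route to actually closing that gap. But as submitted, neither the inductive border-threading argument nor the two-family count is carried out, so the proposal does not yet prove the lemma.
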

\begin{proof}
    We will prove this assertion by combining the recursive relation on $\mathcal{D}_n$ and the induction processes on $n$.

    To proceed with the proof, let us start by considering $\mathcal{G}(\Pi_2)$. Note that in $\mathcal{G}(\Pi_2)$ the degree is equal to $\mathcal{D}_2:=\mathcal{D}_{\mathcal{G}(\Pi_2)}=2=2(2-1)$. And as has been stated before, there are exactly $2=2(2-1)=2(\mathcal{D}_2-1)$ distinct Hamiltonian paths, namely $\mathcal{P}_1$ and $\mathcal{P}_2$. 
    
    However, in $G(\Pi_3)$, the degree is $\mathcal{D}_3:=\mathcal{D}_{\mathcal{G}(\Pi_3)}=4$, and once again, by doing some precaution, it is easy to see that $\mathcal{D}_3=2(3-1)$. And in accordance, it is easy to see that it possesses $6=2(4-1)=2(\mathcal{D}_3-1)$ distinct Hamiltonian paths. This algorithm continues until $G(\Pi_n)$, but let us prove it. We have
    \begin{align*}
        &\mathcal{D}_2=2=2(2-1),\\&
        \mathcal{D}_3=4=2(3-1)=2(2-1)+2,\\&
        \mathcal{D}_4=6=2(4-1)=2(2-1)+2+2\\&
        \mathcal{D}_5=8=2(5-1)=2(2-1)+2+2+2,
    \end{align*}
    and hence, we obtain the recursive relation $\mathcal{D}_i=\mathcal{D}_2+\mathcal{D}_{i-1}$, for any $i\in\{2,\cdots,n\}$. 

    Suppose the first step, i.e. $i=2$, and the $n$th step, i.e. $\mathcal{D}_n$ satisfies. We have 
    \begin{align*}
        \mathcal{D}_{n+1}=\mathcal{D}_2+\mathcal{D}_n&=2(2-1)+2(n-1)\\&=2(2)-2+2(n)-2\\&=2n\\&=2(n+1-1).
    \end{align*}
        
    And since, by definition, we have the number of distinct Hamiltonian paths $\# \mathcal{H}_{\mathcal{G}(\Pi_n)}=2(\mathcal{D}_n-1)$, and hence we have $\# \mathcal{H}_{\mathcal{G}(\Pi_n)}=2(2(n-1)-1)=4n-6$.
    
\end{proof}
\begin{remark}\label{Rem:::2::}
        Hence, from lemma \ref{Lem::1:}, we know that in $\mathcal{G}(\Pi_n)$ there are $4n-6=2(2n-3)$ distinct Hamiltonian paths with edge degree $n^2-1$. But, later on we will notice that we cannot use this as the dimension of our subsystems, but the dimension is very close to this number!
    \end{remark}

And we have the following result.
\begin{lemma}
    There are $4n-6$ distinct quantum channels associated with $\mathcal{G}(\Pi_n)$.
\end{lemma}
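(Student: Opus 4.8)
The plan is to establish a correspondence between the $4n-6$ distinct Hamiltonian paths of $\mathcal{G}(\Pi_n)$, already counted in Lemma \ref{Lem::1:}, and the quantum channels, mirroring exactly the argument of Lemma \ref{Lem:QCH::} in the base case $n=2$. First I would invoke Lemma \ref{Lem::1:} to list the $4n-6$ distinct Hamiltonian paths $\mathcal{P}_1,\dots,\mathcal{P}_{4n-6}$, each running from the unique source vertex to the unique sink vertex of $\mathcal{G}(\Pi_n)$. To each path $\mathcal{P}_t$ I would attach the subfamily of partial isometries $\{S_\ell : \ell \in \mathcal{P}_t^1\}$ indexed by the edges traversed by $\mathcal{P}_t$, taken from the Cuntz-Krieger $\mathcal{G}(\Pi_n)$-family on $\ell^2(\mathbb{N})$ that generalizes the explicit family $S$ of Proposition \ref{Prop:CKQ:1}.

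The crux is then to verify the Cuntz-Krieger relation along each path, namely $\sum_{\ell \in \mathcal{P}_t^1} S_\ell^* S_\ell = I$, just as in the base case one checks $S_e^* S_e + S_i^* S_i + S_g^* S_g = I$ and $S_f^* S_f + S_j^* S_j + S_h^* S_h = I$ for $\mathcal{P}_1$ and $\mathcal{P}_2$. Once this identity is secured, Definition \ref{Defn:QC:} (the Choi-Kraus form) supplies, for each $t$, a completely positive trace-preserving map $\Psi^t_{\mathcal{G}(\Pi_n)}(\mathcal{X}) = \sum_{\ell \in \mathcal{P}_t^1} S_\ell \mathcal{X} S_\ell^*$, i.e.\ a genuine quantum channel, together with its confusability operator system $\mathcal{S}_{\Psi^t_{\mathcal{G}(\Pi_n)}}$. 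Distinctness of the channels I would deduce from distinctness of the paths: two different Hamiltonian paths traverse different edge sets and hence are built from different Choi-Kraus data, so the resulting channels do not coincide, exactly as in the $n=2$ case where the data split into $\{e,i,g\}$ and $\{f,j,h\}$.

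The main obstacle will be the general verification of $\sum_{\ell \in \mathcal{P}_t^1} S_\ell^* S_\ell = I$ for arbitrary $n$. In the explicit $n=2$ computation this rested on the precise index patterns $6n,\,6n-1,\dots$ of the matrix units $E_{ij}$ defining the $S_\ell$; for general $n$ one must first pin down the analogous combinatorial description of the Cuntz-Krieger $\mathcal{G}(\Pi_n)$-family, and then show that summing $S_\ell^* S_\ell$ over the edges of a full source-to-sink path telescopes to the identity. I expect this to reduce to the observation that a Hamiltonian path visits every vertex exactly once, so that the ranges of the projections $S_\ell^* S_\ell$ partition the relevant orthonormal basis; the defining property of a Hamiltonian path then guarantees the completeness relation, and the remaining work is the bookkeeping of the index shifts inherited from Proposition \ref{Prop:CKQ:1}.
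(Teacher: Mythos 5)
Your proposal takes essentially the same route as the paper: invoke Lemma \ref{Lem::1:} to obtain the $4n-6$ distinct Hamiltonian paths and attach to each one a quantum channel built, via the Choi--Kraus form of Definition \ref{Defn:QC:}, from the Cuntz--Krieger partial isometries along its edges, exactly mirroring the $n=2$ case of Lemma \ref{Lem:QCH::}. In fact the paper's own proof is terser than yours---it simply asserts that to each Hamiltonian path one can associate a quantum channel and its confusability graph---so the verifications you flag (the completeness relation $\sum_{\ell}S_{\ell}^{*}S_{\ell}=I$ for general $n$ and the distinctness of the resulting channels) are details the paper leaves implicit rather than gaps in your argument relative to it.
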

\begin{proof}
    The proof of this lemma, is almost clear, since by lemma \ref{Lem::1:}, we have that the number of distinct Hamiltonian paths in $\mathcal{G}(\Pi_n)$ is equal to $4n-6$, and we know that to each Hamiltonian path, we can associate a quantum channel and a correspondent noncommutative confusability graph.
\end{proof}

\hspace*{0.2cm}Then by employing these paths and the partial (matrix) isometries associated with each edge substituted in these Hamiltonian paths, we can formulate the associated quantum channel and the noncommutative (confusability) graph of $\mathcal{G}(\Pi_n)$, and after that, by associating to the vertices involved in each Hamiltonian path we might get some results concerning the $4n-6$-qubit quantum system, which is the goal!

\hspace*{0.2cm} The idea of using (distinct) Hamiltonian paths, came from the famous travelling salesman problem (\textit{TSP}) and its generalization, i.e. the travelling salesman path problem (\textit{TSPP}), and trying to find an optimized algorithm in order to solve that. Since the matrix representation of $\mathbb K[M_q(n)]$ possesses a very nice triangulated partitioning, meaning that the associated directed graphs $\mathcal{G}(\Pi_n)$ are triangulated, and as it is known there is a very common way in studying the \textit{TSP} by using polygonometric (polyhedral) methods, and this has triggered us to propose the following way in approaching these kinds of problems! 

\subsubsection{2-qubit quantum system}
Qubit is the basic unit in the quantum information theory, and in order to formally define it, we require the following first initial postulate of quantum mechanics.
\begin{post}[State Space Postulate]\cite{ALS22}
    In any isolated (physical) system, there is a complex space together with an inner product known as the state space of the system.
\end{post}
\begin{remark}
    Any physical system will completely be described by its unit vector from its state space, on which the vector will be called the state vector of the system.
\end{remark}

Depending on the underlying (directed) graph of the quantum system, the state space will be different. But let us just consider $\mathbb C^p$ to be the state space of our pre-assumed quantum system. Note that, depending on the choice, the state vectors of the state space, will completely describe the space. By a state vector, we mean the unit vector in $\mathbb C^p$, which will be called a qubit, in the language of the information theory, and we have the following definition.
\begin{defn}\label{Defn:Qu:}
    A linear combination $\phi=c_1|0\rangle+c_2|1\rangle$ of the basis vectors of $\mathbb C^2$, for $c_1,c_2\in\mathbb C$ will be called a qubit.
\end{defn}
\begin{remark}
    In Definition \ref{Defn:Qu:}, $c_i$s, for $i\in\{1,2\}$, usually (in the literature) will be called amplitudes, and they satisfy in the condition $|c_1|^2+|c_2|^2=1$, which is the usual normalization condition, it is not too difficult to see that the normalization condition will simply imply $\| |\phi\rangle\|=\sqrt{\langle\phi|\phi\rangle}=1$.
\end{remark}
At this point, we need the second postulate, which could be stated as follows and is concerned with qubit multiples.
\begin{post}\cite{ALS22}
If a system $\phi$ is a combination of two different (physical) systems $\phi_1$ and $\phi_2$, with the corresponding states $\mathbb E_1$ and $\mathbb E_2$, then the state of  $\phi$ will be $\mathbb E_1\otimes\mathbb E_2$, correspondent to $\phi_1\otimes\phi_2$.
\end{post}
\begin{remark}
    In the literature, mostly $\otimes$ symbol is omitted and $|\phi_1\rangle|\phi_2\rangle$ or $|\phi_1\phi_2\rangle$ is written in place of the composition of states $|\phi_1\rangle$ and $|\phi_2\rangle$. But, here in this paper we will not follow this notation, and we will refer to it as just the usual composition.
\end{remark}
We have the following important definition, which has been quoted from \cite{ALS22}.
\begin{defn}
      The state of a $q$-qubit quantum system is a unit vector in $(\mathbb C^2)^{\otimes q}$.
\end{defn}
Now, getting back to the main constructions, let $\{0,1\}^q$ be the binary strings of length $q$. Then by using the binary representation of the natural number $n=2^{k-1}a_{k-1}+\cdots+2a_1+a_0$, for $a_j\in\{0,1\}$, for all $0\leq j\leq k-1$, represented as $s=a_{k-1}a_{k-2}\cdots a_1a_0$, it is known that we can define a $q$-qubit system as \cite{ALS22}
\begin{align*}
    &|\phi\rangle=\sum_{j\in\{0,1\}^q}\alpha_j|j\rangle,\\&\text{where} \ \sum_{j\in\{0,1\}^q}|\alpha_j|^2=1,
\end{align*}
for $\alpha_j\in\mathbb C$. Equivalently we have
\begin{align*}
    &|\phi\rangle=\sum_{k=0}^{2^q-1}\alpha_k|k\rangle,\\&\text{where} \ \sum_{k=0}^{2^q-1}|\alpha_k|^2=1,
\end{align*}
for $\alpha_k\in\mathbb C$.

At this point, we already have what we need in order to enter quantum computing and quantum information theory!

From remark \ref{Rem:::2::}, for $i\in\{2,\cdots,n\}$, in $\mathcal{G}(\Pi_i)$, we know that the dimension of our subsystems are not equal to $i^2-1$, which is the edge dimension of the Hamiltonian paths. Instead, let us take $k=i^2-1-((i-1)^2+1)=2i-3$. Now consider $(\mathbb C^2)^{\otimes 2i-3}$ to be our subsystems of dimension $2^{4i-6}$ divided to the product of $2\times(2i-3)$ 1-qubit states.

Now, let us invite $\mathcal{G}(\Pi_2)$ back to the scene. Take $m=4$ in quantum channel $\Psi_{\mathcal{G}(\Pi_2)}$.

    Here, our quantum system will be a combination of two subsystems $\mathbb E_1$ and $\mathbb E_2$ with the correspondent states $\phi_1$ and $\phi_2$, and since in this case, the Hamiltonian paths are a combination of three edges, hence let $(\mathbb C^2)^{\otimes 1}\cong\mathbb C^2$ be our state space $\phi$. Consider $\phi_1=\alpha_1|0\rangle +\alpha_2|1\rangle$ and $\phi_2=\beta_1|0\rangle +\beta_2|1\rangle$ such that we have $\sum_{i=1}^{2}\alpha_{i}^{2}=\sum_{i=1}^{2}\beta_{i}^{2}=1$. Not that the only possible options for $\alpha_i$s and $\beta_i$s are $\pm\frac{1}{\sqrt{2}}$ and $\pm\frac{i}{\sqrt{2}}$.

Now we can define a 2-qubit system associated with the pre-assumed quantum system as follows
\begin{align*}
    &|\phi\rangle=\sum_{j\in\{0,1\}^2}\alpha_j|j\rangle=\alpha_{00}|00\rangle+\alpha_{01}|01\rangle+\alpha_{10}|10\rangle+\alpha_{11}|11\rangle\\&\hspace*{0.5cm}=\alpha_{0}|0\rangle+\alpha_{1}|1\rangle+\alpha_{2}|2\rangle+\alpha_{3}|3\rangle,\\&\text{where} \ \sum_{j\in\{0,1\}^2}|\alpha_j|^2=1,
\end{align*}
or, equivalently
\begin{align}
&|\phi\rangle=\sum_{k=0}^{3}\alpha_k|k\rangle=\alpha_0|0\rangle+\alpha_1|1\rangle+\alpha_2|2\rangle+\alpha_3|3\rangle,\label{EQU:QS:2:}\\&\text{where} \ \sum_{k=0}^{3}|\alpha_k|^2=1.\notag
\end{align}
It is easy to see that, in (\ref{EQU:QS:2:}), the only possible options for $\alpha_k$s are $\pm\frac{1}{2}$ and $\pm\frac{i}{2}$, and as a result we have the following proposition.
\begin{prop}
    Our pre-assumed 2-qubit, and 6-qubit quantum systems associated with $\mathcal{G}(\Pi_2)$, and $\mathcal{G}(\Pi_3)$, respectively, are not entangled!
\end{prop}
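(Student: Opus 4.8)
The plan is to reduce the statement to the standard separability criterion for pure states and then exhibit the constructed states explicitly as tensor products of single-qubit states. Recall that a pure state of a bipartite system is \emph{not entangled} precisely when it factors as a tensor product of states on the two factors, equivalently when its coefficient matrix in a product basis has rank one (Schmidt rank one), equivalently when every $2\times 2$ minor of that matrix vanishes. So the whole task is to check that the states produced by the construction above genuinely factor, and that this factorization persists down to the level of the individual qubits.

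For the $2$-qubit system associated with $\mathcal{G}(\Pi_2)$, I would start from the single-qubit states $\phi_1=a_0|0\rangle+a_1|1\rangle$ and $\phi_2=b_0|0\rangle+b_1|1\rangle$ with $a_0,a_1,b_0,b_1\in\{\pm\tfrac{1}{\sqrt 2},\pm\tfrac{i}{\sqrt 2}\}$, and compute $\phi_1\otimes\phi_2$ directly. This gives $\alpha_0=a_0b_0$, $\alpha_1=a_0b_1$, $\alpha_2=a_1b_0$, $\alpha_3=a_1b_1$, so the coefficient matrix
\[
M=\begin{pmatrix}\alpha_0 & \alpha_1\\ \alpha_2 & \alpha_3\end{pmatrix}=\begin{pmatrix}a_0\\ a_1\end{pmatrix}\begin{pmatrix}b_0 & b_1\end{pmatrix}
\]
is a rank-one outer product; hence $\det M=\alpha_0\alpha_3-\alpha_1\alpha_2=0$ and the state (\ref{EQU:QS:2:}) is separable. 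As a byproduct this also explains the admissible coefficient list: each $\alpha_k$ is a product of two entries from $\{\pm\tfrac{1}{\sqrt2},\pm\tfrac{i}{\sqrt2}\}$, which forces $\alpha_k\in\{\pm\tfrac12,\pm\tfrac{i}{2}\}$, matching the claim made just before the proposition.

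For the $6$-qubit system associated with $\mathcal{G}(\Pi_3)$ I would argue the same way but one level up. Here $i=3$ gives $k=2i-3=3$, so each of the two subsystems $\mathbb E_1,\mathbb E_2$ lives in $(\mathbb C^2)^{\otimes 3}$ and is itself built as a product of three single-qubit states; the global state is then the product of all $2(2i-3)=6$ single-qubit factors. I would establish non-entanglement by checking that across every bipartition of the six tensor factors the coefficient matrix again factors as an outer product, so that the Schmidt rank equals one in each case; since a product of single-qubit states has this property automatically, the $6$-qubit state is fully separable.

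The main obstacle, as I see it, is not a hard computation but a conceptual one: fixing the precise meaning of ``not entangled'' for a $(4i-6)$-qubit system and confirming that the construction yields full separability across \emph{all} single-qubit cuts, rather than mere separability across the single bipartition $\mathbb E_1\mid\mathbb E_2$. Once the construction is read as ``state $=$ tensor product of $4i-6$ one-qubit states'', the result is essentially forced, and the only point requiring care is that the normalization $\sum_k|\alpha_k|^2=1$ together with the restricted coefficient values is consistent with, and in fact a consequence of, each single-qubit factor being a unit vector.
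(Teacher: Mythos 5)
Your proof is correct, but it is not the argument the paper gives, and the difference matters. The paper's own proof runs entirely on amplitude matching: it observes that the amplitudes of $|\phi_1\rangle\otimes|\phi_2\rangle$ lie in $\left\{\pm\frac{1}{2},\pm\frac{i}{2}\right\}$ (respectively $\left\{\pm\frac{1}{2^3},\pm\frac{i}{2^3}\right\}$ for $\mathcal{G}(\Pi_3)$), ``just the same as in $|\phi\rangle$,'' and concludes from this coincidence of possible amplitude values that $|\phi\rangle$ always \emph{equals} such a tensor product. You instead read the construction as literally producing a product of one-qubit factors and certify separability through the standard criterion (Schmidt rank one, i.e.\ vanishing $2\times 2$ minors of the coefficient matrix), which your outer-product computation verifies. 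What your route buys is soundness: the paper's inference is invalid as a general principle, since a state can have every amplitude in the allowed set and still be entangled --- for example $\frac{1}{2}\left(|00\rangle+|01\rangle+|10\rangle-|11\rangle\right)$ has all amplitudes in $\left\{\pm\frac{1}{2}\right\}$ and satisfies the normalization, yet its coefficient matrix $\frac{1}{2}\left(\begin{smallmatrix}1&1\\1&-1\end{smallmatrix}\right)$ has determinant $-\frac{1}{2}\neq 0$, hence Schmidt rank two, and is in fact maximally entangled. Consequently the proposition is true only under the reading that your proof makes explicit: the ``pre-assumed'' states are, by construction, tensor products of one-qubit states, and then non-entanglement is immediate. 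Your further point that a $(4i-6)$-qubit state should be checked for full separability across every single-qubit cut, not merely across the bipartition $\mathbb{E}_1\mid\mathbb{E}_2$, is also a refinement the paper skips; as you note, it is automatic for products of one-qubit factors. What the paper's one-line argument buys is brevity only --- it reaches the right conclusion for the intended states, but by an inference that would equally ``prove'' the false claim that every admissible amplitude vector yields an unentangled state.
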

\begin{proof}
    The proof of this proposition is almost a trivial conclusion of the Definition \ref{DEFN:QE::}, because, in any case, $|\phi\rangle$, and $|\psi\rangle$, respectively for $\mathcal{G}(\Pi_2)$ and $\mathcal{G}(\Pi_3)$, will always be equal to the tensor product $|\phi_1\rangle\otimes|\phi_2\rangle$, because in $\mathcal{G}(\Pi_2)$ the amplitudes $|\phi_1\rangle\otimes|\phi_2\rangle$ will be equal to $\pm\frac{1}{2}$ and $\pm\frac{i}{2}$, just the same as in $|\phi\rangle$, and in $\mathcal{G}(\Pi_3)$ the amplitudes $|\psi_1\rangle\otimes|\psi_2\rangle$ will be equal to $\pm\frac{1}{2^3}$ and $\pm\frac{i}{2^3}$, just the same as in $|\psi\rangle$
\end{proof}
\begin{defn}\label{DEFN:QE::}\cite{ALS22}
    A quantum state $|\phi\rangle\in(\mathbb C^2)^{\otimes q}$ is a product state if it can be expressed as a tensor product $|\phi_1\rangle\otimes\cdots\otimes|\phi_q\rangle$ of $q$ 1-qubit states. Otherwise, it is entangled.
\end{defn}
\subsubsection{($4n-6$)-qubit quantum system}
We have the following claim.

\begin{cl}
    In the case of $\mathcal{G}(\Pi_n)$, the pre-assumed $(4n-6)$-qubit quantum system is not entangled.
\end{cl}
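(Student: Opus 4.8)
The plan is to reduce the statement to the definition of a product state (Definition \ref{DEFN:QE::}) and to show that, by the very way the $(4n-6)$-qubit system was built, its state vector factors completely into $4n-6$ single-qubit states. First I would fix the notation from Remark \ref{Rem:::2::} and the paragraph preceding Definition \ref{Defn:Qu:}: for $\mathcal{G}(\Pi_n)$ one sets $k=i^2-1-((i-1)^2+1)=2n-3$, so the ambient state space is $(\mathbb C^2)^{\otimes(4n-6)}$, realized as the composition of two subsystems $\mathbb E_1,\mathbb E_2$, each a unit vector $|\phi_1\rangle,|\phi_2\rangle\in(\mathbb C^2)^{\otimes(2n-3)}$, with the full state $|\psi\rangle=|\phi_1\rangle\otimes|\phi_2\rangle$. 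This already exhibits separability across the coarse cut $\mathbb E_1\mid\mathbb E_2$, so the real work is to push the factorization down to individual qubits.

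Next, mirroring the proof of the Proposition for $\mathcal{G}(\Pi_2)$ and $\mathcal{G}(\Pi_3)$, I would argue by induction on $n$ (using the recursive relation $\mathcal{D}_i=\mathcal{D}_2+\mathcal{D}_{i-1}$ established in Lemma \ref{Lem::1:}) that each subsystem vector $|\phi_j\rangle$ is itself a tensor product of $2n-3$ single-qubit states of the normalized form $\tfrac{1}{\sqrt 2}(|0\rangle+\varepsilon|1\rangle)$ with $\varepsilon\in\{\pm1,\pm i\}$, these being the only per-qubit amplitudes allowed, exactly as observed in the $2$-qubit case. Granting this, $|\psi\rangle=\bigotimes_{k=1}^{4n-6}|\phi_k\rangle$ is a tensor product of $4n-6$ single-qubit states, so each amplitude equals $2^{-(2n-3)}\prod_k\varepsilon_k$; since $\{\pm1,\pm i\}$ is closed under multiplication, every amplitude is $\pm\tfrac{1}{2^{2n-3}}$ or $\pm\tfrac{i}{2^{2n-3}}$, and $\sum|\alpha_j|^2=2^{4n-6}\cdot 2^{-(4n-6)}=1$, confirming the normalization condition. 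By Definition \ref{DEFN:QE::}, $|\psi\rangle$ is then a product state, hence not entangled.

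The step I expect to be the genuine obstacle is precisely the full factorizability just invoked, rather than merely separability across $\mathbb E_1\mid\mathbb E_2$: a state all of whose amplitudes share the modulus $2^{-(2n-3)}$ need not be a product state, as it is one only when its phase pattern is \emph{separable}, i.e. a sum of single-qubit phases. I would therefore make the inductive hypothesis carry this separable-phase structure for each subsystem explicitly, and check that the passage from $\mathcal{G}(\Pi_{n})$ to $\mathcal{G}(\Pi_{n+1})$, which raises $2n-3$ to $2n-1$ and so appends two further single-qubit factors per subsystem (four qubits in total, matching $4(n+1)-6=4n-2$), introduces independent tensor factors without any cross terms. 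Once this separable-phase propagation is secured, the conclusion follows immediately from Definition \ref{DEFN:QE::}, exactly as in the $n\in\{2,3\}$ cases treated in the preceding Proposition.
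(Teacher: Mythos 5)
Your proposal is honest about where the difficulty lies, but it does not close it, and that unresolved step is the entire content of the claim. The inductive apparatus you borrow from Lemma~\ref{Lem::1:} (the recursion $\mathcal{D}_i=\mathcal{D}_2+\mathcal{D}_{i-1}$ and the count $4n-6$) controls only \emph{how many} qubits the system has; it carries no information about the amplitudes of the state. To run your induction you would need, for each $n$, a concrete definition of \emph{which} unit vector in $(\mathbb{C}^2)^{\otimes(4n-6)}$ the graph $\mathcal{G}(\Pi_n)$ determines --- that is, a rule reading the phases $\varepsilon_k$ off the Hamiltonian paths or off the Cuntz--Krieger isometries of Theorem~\ref{Cl:Op:1} --- and neither your proposal nor the paper supplies such a rule beyond the $n=2$ case. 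Consequently the sentence ``once this separable-phase propagation is secured, the conclusion follows'' is not a proof but a restatement of the claim: securing it \emph{is} the claim. On the positive side, your observation that a state whose amplitudes all have modulus $2^{-(2n-3)}$ need not be a product state is correct and sharp; it in fact exposes that the paper's own proof of the $n\in\{2,3\}$ proposition is incomplete, since that proof argues exactly by matching the amplitude values $\pm\tfrac{1}{2}$, $\pm\tfrac{i}{2}$ of $|\phi_1\rangle\otimes|\phi_2\rangle$ against those of $|\phi\rangle$, which by your own remark does not force $|\phi\rangle$ to factor.

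You should also know that the paper itself offers no proof of this statement to compare against. It is stated as a Claim, immediately followed by the remark that in order to prove or disprove it one must first settle Claim~2.7 of the earlier work (presented here as Theorem~\ref{Cl:Op:1} on Cuntz--Krieger $\mathcal{G}(\Pi_n)$-families); the paper then sketches a proof of that theorem but never returns to the entanglement question, and the concluding remarks still describe it as a proposed claim. So you attempted an open problem, and your attempt, as written, remains a plan whose decisive step --- exhibiting the $(4n-6)$-qubit state explicitly and verifying by Definition~\ref{DEFN:QE::} that its phase pattern is separable --- is missing.
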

Note that in order to prove or disprove this claim, the interested person first needs to prove or disprove Claim 2.7., from \cite{RH242}, on which we will try to present a sketch of its proof here in this paper as the following Theorem.

\begin{thm}\label{Cl:Op:1}
   For $\mathcal{G}(\Pi_n)=(\mathcal{G}^0,\mathcal{G}^1)$ the associated directed locally finite graphs with $\mathbb K[M_q(n)]$, and $\Pi_n$ the associated adjacency matrices, and $\mathcal{H}:=\ell^2(\mathbb N)$ the underlying infinite dimensional Hilbert space. The claim is that the set 

    $$S=\{ S_i:=\sum_{j=1}^{\infty}\prescript{i}{}{E}_{\mathcal{E}j-A,(n^2-1)j-D} \mid \ \text{for fixed} \ 1\leq i\leq \frac{(n^3+n^2)(n-1)}{2}\}\label{Equ:PIs},$$

is a Cuntz-Krieger $\mathcal{G}(\Pi_n)$-family for $D\in\{0,\cdots,n^2-2\}$, and $\mathcal{E}$ depends on the degree of the exit edges to the vertex $e_{hk}$, where $i$ is considered as an exit edge, i.e. if $\overset{\rightarrow}{\degg}_{hk}=2$, then we will have $\mathcal{E}=2(n^2-1)$, and if it is 3, then we will have $\mathcal{E}=3(n^2-1)$, and so on, and $A\in\{0,\cdots,\overset{\rightarrow}{\degg}_{hk}\times(n^2-1)\}$, and gives us a graph $C^*$-algebra structure $\mathcal{C}^*(\Pi_n)$.
\end{thm}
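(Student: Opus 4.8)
The plan is to check, by a direct computation with matrix units on $\mathcal{H}=\ell^2(\mathbb N)$, that $S$ satisfies the Cuntz--Krieger relations, and then to invoke the universal property of graph $C^*$-algebras to identify the generated algebra with $\mathcal{C}^*(\Pi_n)$; this is exactly the strategy behind Proposition \ref{Prop:CKQ:1} pushed from $n=2$ to arbitrary $n$. First I would fix the combinatorial data of $\mathcal{G}(\Pi_n)$: the $n^2$ vertices $e_{hk}$ coming from the generators $x_{hk}$ of $\mathbb K[M_q(n)]$, the $\frac{(n^3+n^2)(n-1)}{2}=\binom{n^2}{2}$ edges, the out-degrees $\overset{\rightarrow}{\degg}_{hk}$, and the unique source and unique sink. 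The key structural facts I would isolate are that a Hamiltonian path uses $n^2-1$ edges and that there are exactly $n^2-1$ non-sink vertices, which is what allows the $n^2-1$ residues $D\in\{0,\dots,n^2-2\}$ to exhaust the diagonal of $B(\mathcal H)$.

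Next I would attach to each non-sink vertex the diagonal projection $P_{hk}:=\sum_{j}E_{(n^2-1)j-D_{hk},\,(n^2-1)j-D_{hk}}$, where $D_{hk}$ is the residue carried by the column index of every edge leaving $e_{hk}$, and verify that the map sending a non-sink vertex to its residue is a bijection onto $\{0,\dots,n^2-2\}$; granting this, the $P_{hk}$ are mutually orthogonal projections summing to $I$, straight from $E_{ab}E_{cd}=\delta_{bc}E_{ad}$ and $E_{ab}^{*}=E_{ba}$. Using the same relations I would compute, for each edge $i$ with source $e_{hk}$, that $S_i^{*}S_i=\sum_{j}E_{(n^2-1)j-D,\,(n^2-1)j-D}=P_{s(i)}$ and $S_iS_i^{*}=\sum_{j}E_{\mathcal Ej-A,\,\mathcal Ej-A}$, so that each $S_i$ is a partial isometry whose domain projection is precisely the source projection; this is the first Cuntz--Krieger relation.

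The second relation asks that, for each interior vertex $v$, the range projections $S_iS_i^{*}$ of the edges terminating at $v$ be orthogonal and reassemble $P_v$. Because $\mathcal E=\overset{\rightarrow}{\degg}_{hk}(n^2-1)$ is a multiple of $n^2-1$, each $S_iS_i^{*}$ is supported on an arithmetic progression of step $\mathcal E$ inside the step-$(n^2-1)$ progression cut out by the target residue, and the admissible values $A\in\{0,\dots,\overset{\rightarrow}{\degg}_{hk}(n^2-1)\}$ are exactly the offsets that should make these sub-progressions tile $P_v$. I would organize this verification inductively in $n$, using that the subgraph of $\mathcal G(\Pi_n)$ on the generators of $\mathbb K[M_q(n-1)]$ is $\mathcal G(\Pi_{n-1})$ and reusing the recursion $\mathcal D_i=\mathcal D_2+\mathcal D_{i-1}$ of Lemma \ref{Lem::1:}, so that only the edges incident to the newly added generators, together with the rescaling of the index step from $(n-1)^2-1$ to $n^2-1$, need fresh attention. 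Having exhibited a bona fide Cuntz--Krieger $\mathcal G(\Pi_n)$-family with nonzero projections, I would conclude via the universal property that it generates a copy of the graph algebra $\mathcal C^{*}(\Pi_n)$, necessarily infinite-dimensional since $\ell^2(\mathbb N)$ supplies infinitely many independent matrix units.

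The step I expect to be the real obstacle is this tiling in the second relation for general $n$. For $n=2$ every non-sink vertex has the same out-degree, so all the progressions share a single step $\mathcal E$ and the offsets $A$ align trivially; for larger $n$ the edges feeding a common target emanate from sources of \emph{different} out-degree, hence carry progressions of \emph{different} steps $\mathcal E$, and one must show that their $(\overset{\rightarrow}{\degg}_{hk},A)$ data form an exact covering system for the residue class defining $P_v$ --- neither overlapping where orthogonality is required nor leaving gaps. Closely tied to this is the exceptional treatment of the source and the sink: the sink contributes no domain projection and the source receives no edges, so one must confirm that these two vertices are handled consistently and do not spoil either the partition $\sum P_{hk}=I$ or the range bookkeeping. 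Pinning down this degree-dependent indexing uniformly across all $\binom{n^2}{2}$ edges is where the substance of the proof resides.
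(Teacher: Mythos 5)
Your overall skeleton matches the paper's: verify the Cuntz--Krieger relations by direct computation with matrix units on $\ell^2(\mathbb N)$, organize the verification as an induction on $n$, and invoke universality to obtain $\mathcal{C}^*(\Pi_n)$. But the executions differ. The paper's base cases are the cited $n=2$ family of Proposition \ref{Prop:CKQ:1} and an explicitly exhibited family for $n=3$, written down edge by edge from the six Hamiltonian paths of $\mathcal{G}(\Pi_3)$; its inductive step then consists solely of the step-size arithmetic $(n+1)^2-1=(n^2-1)+(2n+1)$, packaged as the recurrence $h_{n+1}=h_n+2n+1$. You instead propose a structural induction through the embedding of $\mathcal{G}(\Pi_{n-1})$ into $\mathcal{G}(\Pi_n)$ and the degree recursion of Lemma \ref{Lem::1:}, which is a more honest description of what an inductive step must actually do, but you never produce the explicit family even for a single base case beyond $n=2$.

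There are two genuine gaps. First, you have transposed the relations: the paper imposes $S_e^{*}S_e=P_{r(e)}$ and $P_v=\sum_{s(e)=v}S_eS_e^{*}$ for $v$ not a sink, whereas you verify $S_i^{*}S_i=P_{s(i)}$ and reassemble $P_v$ from edges \emph{terminating} at $v$. For these concrete matrices this is not a cosmetic choice of convention: in the $n=2$ family the edges $e$ and $f$ share the source $u$ and the same column progression $\sum_j E_{3j-2,3j-2}$, so the two conventions impose genuinely different (and incompatible) constraints, and a proof must commit to, and check, the one actually asserted. Second, and decisively, the two claims carrying all the content --- that the vertex-to-residue map $D_{hk}$ is a bijection onto $\{0,\dots,n^2-2\}$, and that at each vertex the range progressions of step $\mathcal{E}=\overset{\rightarrow}{\degg}_{hk}(n^2-1)$ with offsets $A$ form an exact, non-overlapping cover of the corresponding residue class --- are deferred by your own admission (``where the substance of the proof resides''), so neither CK relation is ever established; what you have is a proof plan, not a proof. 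Worse, the sink issue you flag in passing is a fatal obstruction to the plan as set up: for $n=2$ your three non-sink projections are the three residue classes mod $3$ and already sum to $I$, while the relation at the sink $k$ produces the nonzero projection $S_gS_g^{*}+S_hS_h^{*}$, which therefore cannot be orthogonal to them; there are $n^2$ vertices but only $n^2-1$ residue classes, so ``confirming consistency'' at the sink cannot succeed without modifying the construction itself. (The paper's own sketch does not close these gaps either --- its $n=3$ relations are declared easy to see and its induction only tracks the column step --- but that does not supply the argument your proposal is missing.)
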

\begin{proof}
    In order to prove, we will proceed by induction. But before proceeding, let us clarify the almost approximate values of the parameters $D$. Starting with Hamiltonian paths, we know that they start from the only source vertex $e_{1,1}$ and terminate at the only sink vertex $e_{n,n}$. Hence depending on the first connection, we might have each of $(n^2-1)i-D$, for $D\in\{0,\cdots,n^2-2\}$. For example, if the first connection is $e_{1,1}\rightarrow e_{1,2}$, then we have $D=0$, or if it is $e_{1,1}\rightarrow e_{2,1}$, then we have $D=1$, and \textit{etc}.

    In \cite{RH242}, we already have proved that the assertion is true for $n=2$. Bellow, we will prove that it also is true for $n=3$.

    Exactly in a same approach as in \cite[Proposition 3.1]{RH242}, we note that there might be finite or infinite dimensional sets of projections. But in order to skip the complication, and to shorten the proof, we skip proving that for some certain vertices $e_{i,j}$, we should have $\dim(P_{e_{i,j}})>\infty$, and based on an already proven fact, all the other projections also should be infinite dimensional. So, let us try to construct an appropriate \textit{CK-}$\mathcal{G}(\Pi_3)$ family satisfying the relations $S_{e}^{*}S_e=P_{r(e)}$, for all edges $e\in\mathcal{G}(\Pi_3)^1$, and $P_{e_{ij}}=\sum\limits_{s(e)=e_{i,j}}S_eS_{e}^{*}$ for the case when $e_{i,j}\in\mathcal{G}(\Pi_3)^0$ is not a sink. 
    
    \hspace*{0.0cm} \begin{tikzpicture}
%[
%box/.style={draw,rectangle,minimum size=2cm,text width=1.5cm,align=left}]
\tikzset{vertex/.style = {shape=circle,draw,minimum size=0.2em}}
\tikzset{edge/.style = {->,> = latex'}}
% vertices
\node[vertex] (a) at  (-1,1) {$e_{21}$};
\node[vertex] (b) at  (4,3) {$e_{22}$};
\node[vertex] (c) at  (9,1) {$e_{23}$};
\node[vertex] (d) at  (4,-3) {$e_{31}$};
\node[vertex] (e) at  (-2,-4) {$e_{11}$};
\node[vertex] (f) at  (10,-4) {$e_{33}$};
\node[vertex] (g) at  (0.7,0.5) {$e_{12}$};
\node[vertex] (h) at  (2.3,0.5) {$e_{13}$};
\node[vertex] (i) at  (7.3,-1.2) {$e_{32}$};
%\node[vertex] (a1) at (1.5,0) {};
%\node[vertex] (a2) at (3,0) {};
%edges
%\draw[edge] (a) to node[above] {e} (b);
\draw[edge] (a) to node[above] {$j_5$} (b);
\draw[edge] (b) to node[above] {\hspace{-0.9cm}\begin{turn}{-20}$g_6=f_6$\end{turn}} (c);
\draw[edge] (a) to node[below] {\hspace{0.83cm}\begin{turn}{-220}$e_3=f_3$\end{turn}} (d);
\draw[edge] (e) to node[left] {\begin{turn}{-280}$g_1=h_1$\end{turn}} (a);
\draw[edge] (c) to node[right] {\hspace{-0.4cm} \begin{turn}{-80}$i_8=e_8=j_8=h_8$\end{turn}} (f);
\draw[edge] (d) to node[below] {$1$} (f);
\draw[edge] (e) to node[below] {$i_1$} (d);
\draw[edge] (g) to node[above] {\textcolor{red}{$g_3=h_3$}} (h);
\draw[edge] (h) to node[above] {$14$} (f);
\draw[edge] (g) to node[below] {$13$} (i);
\draw[edge] (h) to node[below] {$7$} (c);
\draw[edge] (b) to node[above] {$e_6=h_6=i_6=j_6$} (i);
\draw[edge] (e) to node[above] {\begin{turn}{-296}$e_1=f_1$\end{turn}} (g);
\draw[edge] (e) to node[below] {$j_1$} (h);
\draw[edge] (g) to node[above] {$i_5$} (b);
\draw[edge] (a) to node[below] {$12$} (c);
\draw[edge] (i) to node[above] {$g_8=f_8$} (f);
%\draw[edge] (c) to node[above] {$g_7=f_7$} (i);
%	\draw[edge] (a)  to[bend left] (a1);
%	\draw[edge] (a1) to[bend left] (a);

%	\draw[edge] (a1) to[bend left] (a2);
%	\draw[edge] (a2) to[bend left] (a1);

%	\path (a2) to node {\dots} (c);
%	\node [shape=circle,minimum size=1.5em] (a3) at (4.5,0) {};
%	\draw[edge] (a2) to[bend left] (a3);
%	\draw[edge] (a3) to[bend left] (a2);

%	\node [shape=circle,minimum size=0.7em] (c1) at (6.5,0) {};
\draw[edge] (b) to[bend left] node[left] {\textcolor{red}{$10$}} (d);
\draw[edge] (d) to[bend left] node[right] {$g_5=h_5$} (b);
\draw[edge] (h) to[bend left] node[left] {\textcolor{RoyalBlue}{\hspace{-3cm}\begin{turn}{-260}$g_4=h_4=j_2$\end{turn}}} (d);
\draw[edge] (d) to[bend left] node[right] {\hspace{0.5cm}\begin{turn}{-250}$e_4=f_4=i_2$\end{turn}} (h);
\draw[edge] (h) to[bend left] node[below] {$i_3$} (a);
\draw[edge] (a) to[bend left] node[above] {$8$} (h);
\draw[edge] (a) to[bend left] node[above] {\textcolor{RoyalBlue}{$g_2=h_2=i_4$}} (g);
\draw[edge] (g) to[bend left] node[below] {\textcolor{red}{$e_2=f_2=j_4$}} (a);
\draw[edge] (b) to[bend left] node[below] {\textcolor{red}{$9$}} (h);
\draw[edge] (h) to[bend left] node[right] {\textcolor{red}{\hspace{-0.5cm}\begin{turn}{50}$e_5=f_5$\end{turn}}} (b);
\draw[edge] (i) to[bend left] node[above] {\textcolor{Red}{\hspace{-1cm}\begin{turn}{0}$e_7=h_7=i_7=j_7$\end{turn}}} (c);
\draw[edge] (c) to[bend left] node[below] {\textcolor{Red}{\vspace{-1cm}\begin{turn}{-320}$g_7=f_7$\end{turn}}} (i);
\draw[edge] (h) to[bend left] node[below] {\textcolor{RoyalBlue}{$2$}} (i);
\draw[edge] (i) to[bend left] node[above] {\textcolor{RoyalBlue}{$3$}} (h);
\draw[edge] (c) to[bend left] node[below] {\textcolor{RoyalBlue}{$5$}} (d);
\draw[edge] (d) to[bend left] node[below] {\textcolor{RoyalBlue}{$4$}} (c);
\draw[edge] (d) to node[below] {\textcolor{Red}{$11$}} (i);
\draw[edge] (g) to[bend right] node[below] {\textcolor{Red}{$6$}} (d);
\draw[edge] (d) to[bend right] node[above] {\textcolor{Red}{$j_3$}} (g);
%\node [below=0.9cm, align=flush center,text width=8cm] at (d)
%{
%	$M_1$
%}

\end{tikzpicture}
\vspace*{0.2cm}
\captionof{figure}{\textbf{Directed locally connected graph $\mathcal{G}(\Pi_3)$}}

\vspace*{0.5cm}
    
 We note that $\mathcal{G}(\Pi_3)$ consists of $6$ Hamiltonian paths listed below:

 \begin{align*}
     &\mathcal{H}_1: e_{11}\overset{e_1}{\longrightarrow}e_{12}\overset{e_2}{\longrightarrow}e_{21}\overset{e_3}{\longrightarrow}e_{31}\overset{e_4}{\longrightarrow}e_{13}\overset{e_5}{\longrightarrow}e_{22}\overset{e_6}{\longrightarrow}e_{32}\overset{e_7}{\longrightarrow}e_{23}\overset{e_8}{\longrightarrow}e_{33},\\&\mathcal{H}_2:e_{11}\overset{f_1}{\longrightarrow}e_{12}\overset{f_2}{\longrightarrow}e_{21}\overset{f_3}{\longrightarrow}e_{31}\overset{f_4}{\longrightarrow}e_{13}\overset{f_5}{\longrightarrow}e_{22}\overset{f_6}{\longrightarrow}e_{23}\overset{f_7}{\longrightarrow}e_{32}\overset{f_8}{\longrightarrow}e_{33},\\&\mathcal{H}_3:e_{11}\overset{g_1}{\longrightarrow}e_{21}\overset{g_2}{\longrightarrow}e_{12}\overset{g_3}{\longrightarrow}e_{13}\overset{g_4}{\longrightarrow}e_{31}\overset{g_5}{\longrightarrow}e_{22}\overset{g_6}{\longrightarrow}e_{23}\overset{g_7}{\longrightarrow}e_{32}\overset{g_8}{\longrightarrow}e_{33},\\&\mathcal{H}_4:e_{11}\overset{h_1}{\longrightarrow}e_{21}\overset{h_2}{\longrightarrow}e_{12}\overset{h_3}{\longrightarrow}e_{13}\overset{h_4}{\longrightarrow}e_{31}\overset{h_5}{\longrightarrow}e_{22}\overset{h_6}{\longrightarrow}e_{32}\overset{h_7}{\longrightarrow}e_{23}\overset{h_8}{\longrightarrow}e_{33},\\&\mathcal{H}_5:e_{11}\overset{i_1}{\longrightarrow}e_{31}\overset{i_2}{\longrightarrow}e_{13}\overset{i_3}{\longrightarrow}e_{21}\overset{i_4}{\longrightarrow}e_{12}\overset{i_5}{\longrightarrow}e_{22}\overset{i_6}{\longrightarrow}e_{32}\overset{i_7}{\longrightarrow}e_{23}\overset{i_8}{\longrightarrow}e_{33},\\&\mathcal{H}_6:e_{11}\overset{j_1}{\longrightarrow}e_{13}\overset{j_2}{\longrightarrow}e_{31}\overset{j_3}{\longrightarrow}e_{12}\overset{j_4}{\longrightarrow}e_{21}\overset{j_5}{\longrightarrow}e_{22}\overset{j_6}{\longrightarrow}e_{32}\overset{j_7}{\longrightarrow}e_{23}\overset{j_8}{\longrightarrow}e_{33}.
 \end{align*}

    Now the claim is that the desired set of partial isometries will be as follows:

    \begin{align*}
        &S_{e_1}=S_{f_1}=E_{32n-31,8n}, S_{g_1}=S_{h_1}=E_{32n-23,8n-1}, S_{i_1}=E_{32n-7,8n-2},\\& S_{j_1}=E_{32n-15,8n-3},\\&S_{g_2}=S_{h_2}=S_{i_4}=E_{40n-25,8n}, S_{e_2}=S_{f_2}=S_{j_4}=E_{40n-32,8n-1}, S_{j_2}=E_{48n-35,8n-2},\\& S_{i_2}=E_{48n-42,8n-3}, \\&S_{j_3}=E_{48n-10,8n}, S_{i_3}=E_{48n-27,8n-1}, S_{e_3}=S_{f_3}=E_{40n-33,8n-2}, \\&S_{g_3}=S_{h_3}=E_{40n-24,8n-3}, \\&S_{i_4}=E_{40n-25,8n}, S_{j_4}=E_{40n-32,8n-1}, S_{g_4}=S_{h_4}=S_{j_2}=E_{48n-35,8n-2}, \\&S_{e_4}=S_{f_4}=S_{i_2}=E_{48n-42,8n-3},\\&S_{e_5}=S_{f_5}=E_{48n-43,8n-4}, S_{g_5}=S_{h_5}=E_{48n-34,8n-4}, S_{i_5}=E_{40n-8,8n-4}, \\&S_{j_5}=E_{40n-1,8n-4},\\&S_{e_6}=S_{h_6}=S_{i_6}=S_{j_6}=E_{32n-28,8n-5}, S_{g_6}=S_{f_6}=E_{24n-14,8n-5}, S_{i_6}=E_{32n-28,8n-5}, \\&S_{j_6}=E_{32n-28,8n-5},\\&S_{e_7}=S_{h_7}=S_{i_7}=S_{j_7}=E_{24n-21,8n-6}, S_{g_7}=S_{f_7}=E_{32n-20,8n-6}, S_{i_7}=E_{24n-21,8n-6}, \\&S_{j_7}=E_{24n-21,8n-6},\\&S_{e_8}=S_{h_8}=S_{i_8}=S_{j_8}=E_{24n-22,8n-7}, S_{g_8}=S_{f_8}=E_{24n-13,8n-7}, S_{i_8}=E_{24n-22,8n-7}, \\&S_{j_8}=E_{24n-22,8n-7},
    \end{align*}
    and the rest of the isometries are apart from the set of Hamiltonian paths, and could be outlined as follows:
    \begin{align*}
        &S_{1}=E_{48n-2,8n-7}, S_{14}=E_{48n-11,8n-7}, S_{4}=E_{48n-29,8n-7},\\& S_{7}=E_{48n-19,8n-6}, S_{12}=E_{40n-9,8n-6},\\& S_{2}=E_{48n-3,8n-5}, S_{11}=E_{48n-26,8n-5}, S_{13}=E_{40n-11,8n-5}, \\& S_{3}=E_{24n-5,8n-3}, S_{8}=E_{40n-17,8n-3}, S_{9}=E_{32n-12,8n-3}, \\& S_{5}=E_{24n-6,8n-2}, S_{6}=E_{40n-16,8n-2}, S_{5}=E_{32n-4,8n-2}.
    \end{align*}
    Now the goal is to prove that the above isometries satisfy in the relations of Cuntz-Krieger $\mathcal{G}(\Pi_3)$-family, i.e. $S_{e}^{*}S_e=P_{r(e)}$, for all edges $e\in\mathcal{G}(\Pi_3)^1$, and $P_{e_{ij}}=\sum\limits_{s(e)=e_{i,j}}S_eS_{e}^{*}$ for the case when $e_{i,j}\in\mathcal{G}(\Pi_3)^0$ is not a sink, and this could be verified simply. Here we just put it over as follows.

    \begin{align*}
        &P_{e_{13}}=S_{3}^{*}S_3=S_{g_3}^{*}S_{g_3}=S_{j_1}^{*}S_{j_1}=S_{e_4}^{*}S_{e_4}=S_{9}^{*}S_{9}=S_{8}^{*}S_{8}=S_{e_5}S_{e_5}^{*}+S_{g_4}S_{g_4}^{*}+S_{i_3}S_{i_3}^{*}\\& \hspace{0.7cm}+S_{7}S_{7}^{*}+S_{14}S_{14}^{*}+S_{2}S_{2}^{*},\\&P_{e_{22}}=S_{e_5}^{*}S_{e_5}=S_{g_5}^{*}S_{g_5}=S_{j_5}^{*}S_{j_5}=S_{i_5}^{*}S_{i_5}=S_{e_6}S_{e_6}^{*}+S_{g_6}S_{g_6}^{*}+S_{9}S_{9}^{*}+S_{10}S_{10}^{*},\\&P_{e_{23}}=S_{e_7}^{*}S_{e_7}=S_{12}^{*}S_{12}=S_{g_6}^{*}S_{g_6}=S_{7}^{*}S_{7}=S_{4}^{*}S_{4}=S_{e_8}S_{e_8}^{*}+S_{g_7}S_{g_7}^{*}+S_{5}S_{5}^{*}, \\&P_{e_{32}}=S_{2}^{*}S_{2}=S_{g_7}^{*}S_{g_7}=S_{e_6}^{*}S_{e_6}=S_{13}^{*}S_{13}=S_{11}^{*}S_{11}=S_{e_7}S_{e_7}^{*}+S_{g_8}S_{g_8}^{*}+S_{3}S_{3}^{*},
        \end{align*}
        
        \begin{align*}
   &P_{e_{33}}=S_{1}^{*}S_{1}=S_{14}^{*}S_{14}=S_{e_8}^{*}S_{e_8}=S_{g_8}^{*}S_{g_8},\\&P_{e_{31}}=S_{i_1}^{*}S_{i_1}=S_{e_3}^{*}S_{e_3}=S_{10}^{*}S_{10}=S_{g_4}^{*}S_{g_4}=S_{5}^{*}S_{5}=S_{6}^{*}S_{6}=S_{e_4}S_{e_4}^{*}+S_{g_5}S_{g_5}^{*}+S_{11}S_{11}^{*}\\&\hspace{0.7cm}+S_{4}S_{4}^{*}+S_{j_3}S_{j_3}^{*}+S_{1}S_{1}^{*},\\&P_{e_{11}}=S_{e_1}^{*}S_{e_1}+S_{g_1}S_{g_1}^{*}+S_{i_1}S_{i_1}^{*}+S_{j_1}S_{j_1}^{*},\\&P_{e_{21}}=S_{g_1}^{*}S_{g_1}=S_{e_2}^{*}S_{e_2}=S_{i_3}^{*}S_{i_3}=S_{e_3}S_{e_3}^{*}+S_{g_2}S_{g_2}^{*}+S_{8}S_{8}^{*}+S_{12}S_{12}^{*}+S_{j_5}S_{j_5}^{*},\\&P_{e_{12}}=S_{g_2}^{*}S_{g_2}=S_{e_1}^{*}S_{e_1}=S_{j_3}^{*}S_{j_3}=S_{e_2}S_{e_2}^{*}+S_{g_3}S_{g_3}^{*}+S_{6}S_{6}^{*}+S_{i_5}S_{i_5}^{*}+S_{13}S_{13}^{*},
    \end{align*}
and at this point, we are not going to have a detailed proof of the above relations, since it is easy to see them. This provide us with the desired Cuntz-Krieger $\mathcal{G}(\Pi_3)$-family, and shows that the $C^*(S,P)$ is an infinite-dimensional $C^*$-algebra.

Now in order to proceed further, let us see what we have. For $n=2$, we have $S_i=\sum_{j=1}^{\infty}\prescript{i}{}{E}_{\mathcal{E}j-A,3j-D}$, and for $n=3$ case we have $S_i=\sum_{j=1}^{\infty}\prescript{i}{}{E}_{\mathcal{E}j-A,8j-D}$, and $S_i=\sum_{j=1}^{\infty}\prescript{i}{}{E}_{\mathcal{E}j-A,15j-D}$ for the $n=4$ case, and so on. So, we get a sequence of numbers $3, 8, 15, 24, 35, \cdots$. In order to have a defining rule for this sequence, we use the recurrence relations $a_{n-2}+2n=a_{n-1}$, and $b_{n-2}+1=b_{n-1}$, for $n\in\{2,\cdots\}$, together with conditional relations $a_0=1$ and $b_0=2$. 

Now consider the recurrence relation $h_n+2n+1=h_{n+1}$, for $h_n=b_{n-2}+a_{n-2}$ and the conditional relation $h_2=b_0+a_0=3$.

Hence, we get the following relation
$$S=\{ S_i:=\sum_{j=1}^{\infty}\prescript{i}{}{E}_{\mathcal{E}j-A,h_nj-D} \mid \ \text{for fixed} \ 1\leq i\leq \frac{(n^3+n^2)(n-1)}{2}\}\label{Equ:PIs:::},$$
for $h_{n}$ as above,and $n\in\{2,\cdots\}$. Now we can proceed by induction on $n$. For $n=2,3$, we already have seen that the assertion holds. Suppose that for $\mathcal{G}(\Pi_i)$, $i\in\{2,3,\cdots\}$ (\ref{Equ:PIs:::}) satisfies.

Now the claim is that for $\mathcal{G}(\Pi_{n+1})$ we have
$$S=\{ S_i:=\sum_{j=1}^{\infty}\prescript{i}{}{E}_{\mathcal{E}j-A,h_{n+1}j-D} \mid \ \text{for fixed} \ 1\leq i\leq \frac{(n^3+n^2)(n-1)}{2}\}\label{Equ:PIs:::},$$
which is almost clear, since we have $(n+1)^2-1=n^2-1+2n+1=h_n+2n+1=h_{n+1},$ and we are done with the induction steps!

%    Now suppose that the assertion is true for $\mathcal{G}(\Pi_n)$. We will prove that it is true for $\mathcal{G}(\Pi_{n+1})$.

 %   $\mathcal{G}(\Pi_n)$ has $4n-6$ Hamiltonian paths with edge degree $n^2-1$, starting from $e_{1,1}$ and terminated at $e_{n,n}$. In other words $\mathcal{G}(\Pi_{n+1})$ has $2(\mathcal{D}_{n+1}-1)=2(2((n+1)-1)-1=2(2n-1)=4n-2$ Hamiltonian paths, each with edge degree $(n+1)^2-1$, starting from $e_{1,1}$ and terminating at $e_{n+1,n+1}$. And as our approach is based on construction, and based on the induction steps, and the fact that $\mathcal{G}(\Pi_n)\subset \mathcal{G}(\Pi_{n+1})$, hence clearly the relation of $\mathcal{G}(\Pi_{n+1})$-Cuntz-Krieger family will satisfy in the case of $S_i$'s associated with $\mathcal{G}(\Pi_{n+1})$.
\end{proof}
\begin{remark}
    If you have already noted that the 
     quantum state of our quantum systems will be $(\mathbb{C}^2)^{\otimes 2}=\mathbb{C}^2\otimes\mathbb{C}^2$, $(\mathbb{C}^2)^{\otimes 6}=(\mathbb{C}^2)^{\otimes 3}\otimes(\mathbb{C}^2)^{\otimes 3}$, $(\mathbb{C}^2)^{\otimes 10}=(\mathbb{C}^2)^{\otimes 5}\otimes (\mathbb{C}^2)^{\otimes 5}$, and so on, with dimensions $4, 64, 1024$ and so on, in order. And hence we might formulate the following claim.
\end{remark}
\begin{cl}
    The quantum system arising from $\mathcal{G}(\Pi_n)$ agrees and improves (in quantum scale) the ordinary classical systems!
\end{cl}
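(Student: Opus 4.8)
The plan is first to convert the informal assertion into a precise mathematical comparison, since as stated the words \emph{agrees} and \emph{improves} carry no intrinsic quantitative content. I would attach two objects to $\mathcal{G}(\Pi_n)$. On the one hand, the \emph{classical} datum: the finite combinatorial graph together with its $4n-6$ distinct Hamiltonian paths (equivalently, the commutative information encoded by the unique commuting magic unitary $\pi_n$). On the other hand, the \emph{quantum} datum: the $(4n-6)$-qubit state space $(\mathbb{C}^2)^{\otimes(4n-6)}$ of dimension $2^{4n-6}$ built from the Cuntz--Krieger $\mathcal{G}(\Pi_n)$-family of Theorem \ref{Cl:Op:1}. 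With these fixed, ``agrees'' is to be read as a classical-limit/correspondence statement, and ``improves'' as a strict capacity inequality.

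For the ``agrees'' half, I would pass to the commutative sector of the graph $C^*$-algebra $\mathcal{C}^*(\Pi_n)$ --- concretely, abelianize by quotienting out the commutator ideal, or restrict to a maximal abelian subalgebra --- and show that the resulting commutative object reproduces exactly the classical path structure. The $4n-6$ mutually orthogonal minimal projections indexed by the Hamiltonian paths should recover the $4n-6$ classical messages, and the product state $|\phi\rangle=|\phi_1\rangle\otimes\cdots\otimes|\phi_{4n-6}\rangle$ collapses to a deterministic classical configuration. This step uses the confusability operator systems $\mathcal{S}_{\Psi_{\mathcal{G}(\Pi_n)}}$ from the earlier construction, whose diagonal restriction is precisely the classical confusability graph, so that the quantum description specializes to the classical one in the expected way.

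For the ``improves'' half, the plan is a zero-error capacity comparison in the Duan--Severini--Winter framework: the classical system distinguishes at most $4n-6$ messages, whereas the state space $(\mathbb{C}^2)^{\otimes(4n-6)}$ supplies $2^{4n-6}$ linearly independent states. I would quantify the gap by bounding the relevant invariant (the quantum independence number, or the Lov\'asz $\vartheta$) of $\mathcal{S}_{\Psi_{\mathcal{G}(\Pi_n)}}$ from below by the classical value and then showing the inequality is strict for $n\ge 3$. The dimension sequence $4,64,1024,\dots$, namely $2^{4n-6}$, already signals the exponential separation that makes ``improves'' plausible.

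The hard part --- and the place where I expect the real difficulty --- is making the ``improves'' direction rigorous. The earlier proposition shows these states are \emph{product} states rather than entangled, so any genuine advantage cannot originate from entanglement and must instead be extracted from dimensionality and superposition alone; one must therefore argue carefully that the exponential state count yields a strict capacity gain and is not merely a redundant re-encoding of the $4n-6$ classical messages. Establishing this cleanly requires computing, or at least two-sidedly bounding, the zero-error capacity invariants of $\mathcal{S}_{\Psi_{\mathcal{G}(\Pi_n)}}$, which in turn leans on the explicit partial-isometry data furnished by Theorem \ref{Cl:Op:1}. Pinning down whether the separation is genuinely strict, rather than a formal artifact of the tensor bookkeeping, is the crux of the claim.
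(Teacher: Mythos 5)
Your proposal sets out to prove something the paper itself never proves: this statement is left as a bare claim with no proof environment attached, and the only supporting text is the preceding remark recording the dimension sequence $4, 64, 1024, \dots$, i.e.\ $2^{4n-6}$, together with the one-line assertion that $10$ qubits ``will be equivalent to'' $1024$ bytes of classical memory (itself garbled: $1024$ bytes is a kilobyte, not a megabyte). So there is no paper proof to compare your attempt against, and your effort to first give the words ``agrees'' and ``improves'' precise content --- a classical-limit statement via the abelianization or diagonal of $\mathcal{C}^*(\Pi_n)$, plus a strict capacity inequality in the Duan--Severini--Winter framework --- already goes beyond anything the paper offers for this claim.

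That said, as a proof your proposal has a genuine gap, and you have located it yourself: the ``improves'' half is never established, and the route you sketch cannot establish it as stated. Comparing the $2^{4n-6}$ linearly independent states of $(\mathbb{C}^2)^{\otimes(4n-6)}$ with the $4n-6$ classical messages is exactly the conflation of Hilbert-space dimension with retrievable information that the Holevo bound forbids: a $(4n-6)$-qubit system yields at most $4n-6$ classical bits on measurement, so the exponential dimension count gives no capacity separation by itself. Moreover, the paper's earlier proposition shows that for $n=2,3$ the states this construction produces are product states with amplitudes forced to be $\pm\tfrac{1}{2^{2n-3}}$ or $\pm\tfrac{i}{2^{2n-3}}$ (the general case being itself only a claim), so no entanglement resource is available to power a genuine quantum advantage; any honest separation would have to come from actually computing the zero-error invariants (quantum independence number, Lov\'asz $\vartheta$) of $\mathcal{S}_{\Psi_{\mathcal{G}(\Pi_n)}}$, a computation neither you nor the paper carries out. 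In short, the claim is open in the paper, your plan correctly identifies the crux, and the crux remains untouched --- the dimension-counting heuristic you lean on is precisely the ``formal artifact of the tensor bookkeeping'' you warn against.
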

In other case, it is easy to see that $10$-qubit in the quantum system introduced in this paper will be equivalent to the 1 \textit{MB}=1024 \textit{Bytes} of the classical systems.
\section{Concluding remarks}
We believe that the research conducted in this paper is very interesting, and if we want to describe it in just one sentence it would be ``from simplicity to complexity''! 

\hspace*{0.2cm}Once again, and as in our previous works \cite{RH24,RH242}, we started from our toy example $\mathbb K[M_q(n)]$, and motivated by the graph $C^*$-algebras through Cuntz-Krieger $\mathcal{G}(\Pi_n)$-family for the finite locally connected directed graph $\mathcal{G}(\Pi_n)$, and then we introduced the first initial concept of a 2-qubit entangled quantum system based on $\mathcal{G}(\Pi_2)$, by discovering its relations with the Hamiltonian paths! 

\hspace*{0.2cm}Then we proposed a claim on if the general case also provides us with an entangled quantum system.

\hspace*{0.2cm}In continuation, for an interested reader, a direction which could be proposed is to look at the other characteristics (\ref{St:1}, (\ref{C:3}, and to prove that if they really are equivalent with (\ref{St:1}, and since this is a structural example, based on construction, hence we believe that it is almost applicable!

\section{Acknowledgement}
\hspace*{0.2cm} The author of this manuscript would like to express his sincere thanks and gratitude for the hospitality of the Department of Mathematics of the Azarbaijan Shahid Madani University, on which most of the work has been extracted and has been finalized, and he would like to acknowledge his postdoctoral grant with contract No. 117.d.22844 - 08.07.2023.

\hspace*{0.2cm} The research conducted in this research paper was also in part supported by a grant from IPM (No. 1403170014)

%\hspace*{0.2cm} The second author was supported by the Department of Mathematics of the Azarbaijan Shahid Madani University under grant No. 1402/270 - 19.04.2023.

%\hspace*{0.2cm}The authors of this manuscript have equal contributions to this work.

\def\cprime{$'$} \def\cprime{$'$} \def\cprime{$'$}
\providecommand{\bysame}{\leavevmode\hbox to3em{\hrulefill}\thinspace}
\providecommand{\MR}{\relax\ifhmode\unskip\space\fi MR }
% \MRhref is called by the amsart/book/proc definition of \MR.
\providecommand{\MRhref}[2]{%
	\href{http://www.ams.org/mathscinet-getitem?mr=#1}{#2}
}
\providecommand{\href}[2]{#2}

\let\itshape\upshape


\begin{thebibliography}{10}
\bibitem{ALS22}
Arenstein, Lucas Silva. An introduction to quantum: computing, communication complexity protocols, nonlocality and graph parameters. \em{PhD diss.}, Universidade de S\~{a}o Paulo, \textbf{2022}.

\bibitem{BCEHPSW19}
Michael Brannan, Alexandru Chirvasitu, Kari Eifler, Samuel Harris, Vern Paulsen, Xiaoyu Su, and Mateusz Wasilewski.  Bigalois extensions and the graph isomorphism game. \em{Communications in Mathematical Physics,} \emph{375(3):1777–1809,} {\bf 2019}.

 \bibitem{IR05}
Raeburn, I. Graph Algebras. {\em American Mathematical Society}, \emph{(No. 103)},  {\bf 2005}, {Providence, RI, USA}.

\bibitem{KW12}
Greg Kuperberg and Nik Weaver. A von Neumann algebra approach to quantum metrics/Quantum relations. {\em American Mathematical Society,} \emph{(No. 1010)}, \textbf{2012}.

\bibitem{MRV18}
Benjamin Musto, David Reutter, and Dominic Verdon. A compositional approach to quantum functions. {\em Journal of Mathematical Physics}, \emph{59(8):081706,} \textbf{2018}.

\bibitem{RH24}
Razavinia, Farrokh, and Haghighatdoost, Ghorbanali. From Quantum Automorphism of (Directed) Graphs to the Associated Multiplier Hopf Algebras. {\em Mathematics}, \textbf{2024}, \emph{12.1}: 128.

\bibitem{RH242}
 Razavinia, Farrokh. Into Multiplier Hopf ($*-$)graph algebras. arXiv preprint arXiv: 2403.09787 (\textbf{2024})

 
\bibitem{RV22}
Rollier, L.; Vaes, S. Quantum automorphism groups of connected locally finite graphs and quantizations of discrete groups. \em{arXiv} \textbf{2022}, arXiv:2209.03770.

\bibitem{RVW12}
Runyao Duan, Simone Severini, and Andreas Winter. Zero-error communication via quantum channels, noncommutative graphs, and a quantum Lov\'{a}sz number. {\em IEEE Transactions on Information Theory,} {\bf 2012}, {\emph 59(2)}:1164--1174.

\bibitem{W21}
Nik Weaver. Quantum graphs as quantum relations. {\em The Journal of Geometric Analysis,} {\bf 2021}: 1--23.
\end{thebibliography}
\end{document}